\theoremstyle{plain}
\def\upint{\mathchoice%
    {\mkern13mu\overline{\vphantom{\intop}\mkern7mu}\mkern-20mu}%
    {\mkern7mu\overline{\vphantom{\intop}\mkern7mu}\mkern-14mu}%
    {\mkern7mu\overline{\vphantom{\intop}\mkern7mu}\mkern-14mu}%
    {\mkern7mu\overline{\vphantom{\intop}\mkern7mu}\mkern-14mu}%
  \int}
\def\lowint{\mkern3mu\underline{\vphantom{\intop}\mkern7mu}\mkern-10mu\int}
\newtheorem{theorem}{Theorem}
\newtheorem{lemma}{Lemma}
\newtheorem{definition}{Definition}
\newtheorem{notation}{Notation}
\begin{document} 
\mainmatter

\begin{center}

{\Large
\textbf{A Sequential Approach to the Henstock Integral} \\
}
{\large
\bigskip
Laramie Paxton  \\

Washington State University\\
}
realtimemath@gmail.com

\end{center}
\bigskip

{\large
\noindent \textbf{Abstract:} The theory of integration over $\mathbb{R}$ is rich with techniques as well as necessary and sufficient conditions under which integration can be performed. Of the many different types of integrals that have been developed since the days of Newton and Leibniz, one relative newcomer is that of the Henstock integral, aka the Henstock-Kurzweil  integral, Generalized Riemann integral, or gauge integral, which was discovered independently by Henstock and Kurzweil in the mid-1950s. In this paper, we develop an alternative, sequential definition of the Henstock integral over closed intervals in $\mathbb{R}$ that we denote as the Sequential Henstock integral. We show its equivalence to the standard  $\epsilon - \delta$ definition of the Henstock integral as well as to the Darboux definition and to a topological definition of the Henstock integral. We then establish the basic properties and fundamental theorems, including two convergence theorems, for the Sequential Henstock integral and offer several suggestions for further study.

}

%\doublespacing
\bigskip

\bigskip

{\large
\noindent \textbf{Acknowledgements:} We would like to acknowledge Dr. Peng Yee Lee for his support and excellent ideas regarding this project. Without his suggestion to pursue the Sequential Henstock integral, this work would not have developed.
}
%\mainmatter
%\doublespacing
%\section*{Introduction}
 %use this command to start a chapter, putting your chapter name in the { }.
%Chapter contents goes here

\section*{Introduction} %use this command to start a new section, putting your section name in the { }.
{\large
The Riemann integral, which is familiar to the reader from calculus, can be loosely defined as the limit of the sum of narrow ``strips" of area under a function taken as the width of each strip goes to 0. While this is the standard integral used in introductory calculus courses, the limitations of the Riemann integral are well known \cite{Abb01}, \cite{Bar01}, \cite{Gor94}, \cite{LV00}, \cite{Tho13}, \cite{Pfe93}, which we discuss below. Thus, since the early 20th century, mathematicians have sought to refine the rigorous foundation of integration as well as enlarge the class of integrable functions in search of further applications of integration theory. Researchers like Lebesgue, Denjoy, Perron, Henstock, and Kurzweil all made significant achievements towards this aim, with the Henstock integral being one of the more recent of these, having been discovered independently by English mathematician Ralph Henstock and Czech mathematician Jaroslav Kurzweil in 1955 and 1957, respectively~\cite{Bar96}~\cite{Sul11}. 

The problem that we examine in this paper is that of developing the theory for a specific definition of the Henstock integral that was defined, but not developed, by Lee \cite{Lee08}, called the Sequential Henstock integral. Starting with Lee's definition, we show the equivalence of the Sequential Henstock integral to several other definitions of the Henstock integral, and we prove the basic properties of and several fundamental theorems for the Sequential Henstock integral. 

As a motivating example, consider a simple function, such as $f(x)=-x^2+4x$, which is an upside down parabola crossing the $x$-axis at $(0,0)$ and $(0,4)$. At its most basic level, we can view integration as finding the area underneath a curve. So, in this case, we want to find the area enclosed between the parabola and the $x$-axis. The different types of integrals that have been developed simply allow us to take various approaches to finding this area. As noted above, the Riemann approach is to take vertical strips of infintesimal width and add up the area contribution of each width. Lebesgue's idea was to use horizontal strips, based on the range values of the function, in order to integrate functions that, loosely speaking, have too many jumps and gaps. The main idea behind the Henstock integral is very similar to the Riemann integral, but we can now use different widths at different points of the function for our vertical strips in order to accommodate jumps and gaps. Returning to our example, we find that the area in consideration equals $32/3$, and the Riemann integral works quite nicely for such a well behaved function. Not all functions are smooth and continuous like our parabolic function though, hence the motivation behind finding other approaches to integration. 

The relevance of developing the theory for the Sequential Henstock integral is twofold. First, as discussed below, there are limitations to the most commonly used types of integrals and the types of functions they are able to integrate, the Riemann integral being one of them. These limitations led to the development of the Henstock integral, as mentioned above, but in its current form, researchers have met with limited success at extending the Henstock integral beyond $\mathbb{R}$, the set of real numbers \cite{Sch09}.

Therefore, the motivation for this research mostly comes from the potential for expanding the overall theory of Henstock integration into more abstract mathematical settings, which in turn may lead to further applications of the Henstock integral. This is described in more detail in the suggestions for further study at the end of the paper, but as an example, by using the Sequential Henstock integral and the theory herein developed, we think that it will be possible to extend the theory of Henstock integration more readily to abstract spaces through the use of generalized sequences, which the original Henstock integral does not use. 

Another important reason behind this research is that the use of the Sequential Henstock integral has been suggested as a way to better facilitate beginning calculus students' understanding of integration \cite{Lee08}, so with the development of the theory behind the Sequential Henstock integral, it will now be possible to test this hypothesis.

\section*{Glossary of Useful Definitions, Lemmas \& Theorems}
{\large

%\subsection{A subsection} %use this command to start a new subsection, putting the title in { }
%Subsection contents goes here
\begin{center}
\textbf{Useful Definitions \& Notation}
\end{center}

%\begin{definition}\label{D:Compactness}

\noindent \textbf{Compact:} A set $K \subseteq \mathbb{R}$ is \emph{compact} if every sequence in $K$ has a subsequence that converges to a limit that is also in $K$. Equivalently,  $K \subseteq \mathbb{R}$ is compact if and only if it is both closed and bounded  \cite{Abb01}. 
\newline
%\end{definition}

\begin{notation}\label{N:Seq.Def}

Throughout this paper, let $I$ denote a
compact interval $[a, b] \subset \mathbb{R}$ and let \\ $i, j, k, m, n, p\in \mathbb{N}$. We denote the class of \emph{Sequential Henstock integrable} functions on $I$ as $H^*(I)$.
\end{notation}
\bigskip

%\begin{definition}\label{D:Absolutely Integrable}

\noindent \textbf{Absolutely Integrable:} A function $f:I\to \mathbb{R}$ is said to be \emph{absolutely integrable} on $I$ if $|f|$ is also integrable on $I$ \cite{Bar01}.
\newline
%\end{definition}

%\begin{definition}\label{D:Cauchy Sequence}

\noindent \textbf{Cauchy Sequence:} A sequence $\{a_n\}$ is a \emph{Cauchy sequence} if for every $\epsilon>0$, there exists an $N \in \mathbb{N}$ such that if $m,n>N$, we have $|a_m-a_n|<\epsilon$ \cite{Abb01}. 
\newline
%\end{definition}

%\begin{definition}\label{D:Metric Space}

\noindent \textbf{Metric Space:} A \emph{metric space} is a set $X$ together with a metric $d$, where $d$ is a measure of ``distance" that exhibits the properties of reflexivity, symmetry, and transitivity \cite{Abb01}. 
\newline
%\end{definition}

%\begin{definition}\label{D:Completeness in Metric Spaces}

\noindent \textbf{Complete (Metric Space):} A metric space $(X,d)$ is \emph{complete} if every Cauchy sequence in $X$ converges to an element of $X$ \cite{Abb01}. 
\newline
%\end{definition}

%\begin{definition}\label{D:Completeness in R}

\noindent \textbf{Completeness of $\mathbb{R}$:} The \emph{completeness} of $\mathbb{R}$ is defined such that every nonempty set of real numbers that is bounded above has a least upper bound \cite{Abb01}. 
\newline
%\end{definition}

%\begin{definition}\label{D:deltafine}

\noindent \textbf{$\delta$-fine:} Let $\delta>0$. A partition $p$ is \emph{$\delta$-fine} if every subinterval 
$[x_{i-1}, x_i]$ satisfies $x_i-x_{i-1}<\delta$ \cite{Bar01}.

%\end{definition}

%\begin{definition}\label{D:deltafine}

\noindent \textbf{$\delta(x)$-fine:} Let $\delta(x)>0$. A tagged partition $P=\{([x_{i-1}, x_i], t_i)\}_{i=1}^k$ is \emph{$\delta(x)$-fine} if every subinterval 
$[x_{i-1}, x_i]$ satisfies $x_i-x_{i-1}<\delta(t_i)$ \cite{Abb01}.
\newline
%\end{definition}

%\begin{definition}\label{D:gauge}

\noindent \textbf{Gauge:} A \emph{gauge} is a function $\delta:I \to \mathbb{R}$ such that 
$\delta(x)>0$ for each $x \in I$ \cite{Bar01}.
\newline
%\end{definition}

%\begin{definition}\label{D:Hausdorff Space}

\noindent \textbf{Hausdorff Space:} A space $X$ is called a \emph{Hausdorff space} if for each pair $x_1, x_2$ of distinct points of $X$, there exist neighborhoods $U_1$ and $U_2$ of $x_1$ and $x_2$, respectively, that are disjoint \cite{Mun00}.
\newline
%\end{definition}

%\begin{definition}\label{D:e/d}

\noindent \textbf{Henstock Integral ($\epsilon-\delta$ Definition):} A function $f:I \to \mathbb{R}$ is \emph{Henstock integrable} on $I$ if there exists a number $A$ such that for each $\epsilon>0$ there exists a $\delta(x)>0$ such that for all $\delta(x)$-fine tagged partitions $P$ of $I$, we have $|S(f,P)-A|<\epsilon$. We say that $A$ is the Henstock integral of $f$ on $I$ with $A=\int_If$ \cite{Bar01}.
\newline
%\end{definition}

%\begin{definition}\label{D:Improper Riemann}

\noindent \textbf{Improper Riemann Integral:} An \emph{improper Riemann integral} is the limit of a definite Riemann integral taken as one or both of the endpoints of the interval tend to a given real number or to $\pm\infty$. For example,
\[
\int_a^\infty f \quad \Longleftrightarrow \quad \lim_{b\to\infty} \int_a^b f \qquad \cite{Rud76}.
\]
%\end{definition}

%\begin{definition}\label{D:Integrable}

\noindent \textbf{Integrable:} A function $f:I \to \mathbb{R}$ is \emph{integrable} under a given type of integral (e.g. Riemann, Lebesgue, or Henstock) if the value of the integral of $f$ is defined for the given type of integral, i.e. contained in the class of functions that can be integrated using the integral. 

\emph{Note: The Riemann integral of a function may not be defined, but the Lebesgue and/or Henstock integral may be (see (\ref{dirichlet}), Dirichlet's function, for an example of such a function).}
\newline
%\end{definition}

%\begin{definition}\label{D:Lebesgue Integral}

\noindent \textbf{Lebesgue Integral:} \emph{Note: The formal definition of the Lebesgue integral is outside of the scope of this paper, requiring many preliminary notions, so we shall instead define the class of functions that can be integrated with it.} 

A function $f:I \to \mathbb{R}$ is \emph{Lebesgue integrable} if both $f$ and $|f|$ are Henstock integrable or Sequential Henstock integrable. In other words, $f$ is absolutely integrable \cite{Bar01}.
\newline
%\end{definition}

%\begin{definition}\label{D:Local Compactness}

\noindent \textbf{Locally Compact:} A space $X$ is said to be \emph{locally compact}  if for each point $x\in X$, there exists a compact subspace $C_x$ that contains a neighborhood of $x$ \cite{Mun00}.
\newline
%\end{definition}

%\begin{definition}\label{D:Measure Zero Set}

\noindent \textbf{Measure Zero:} A set $Z \subset \mathbb{R}$ is said to be a   \emph{set of measure zero} if for every $\epsilon>0$ there exists a countable collection $\{J_k\}^\infty_{k=1}$ of open intervals such that
\[
Z \subseteq \bigcup_{k=1}^\infty J_k \qquad \text{and} \qquad \sum_{k=1}^\infty l(J_k) \leq \epsilon \qquad \cite{Bar01}.
\]
\emph{See Notation \ref{N:RieSum1} below regarding the use of $l(J_k)$.}
%\end{definition}
\newline

\noindent \textbf{Monotone Function:} A function $f:I\to \mathbb{R}$ is said to be \emph{monotone} if it is either non-decreasing or non-increasing on  $ \mathbb{R}$.
\newline

%\begin{definition}\label{D:partition}

\noindent \textbf{Partition:} A \emph{partition} $p$ of an interval $I$ is a finite set of points $\{x_i\}_{i=0}^k$ such that 

\begin{center} $a=x_0<x_1< \dots <x_{k-1}<x_k=b$ \qquad \cite{Bar01}. \end{center}

%\end{definition}

%\begin{definition}\label{D:RiemannSum}

\noindent \textbf{Riemann Sum:} Let $f:I \to \mathbb{R}$ and let $P=\{([x_{i-1}, x_i], t_i)\}_{i=1}^k$ be a tagged partition of $I$. Then the \emph{Riemann sum} of $f$ on $P$ is defined by
\begin{equation}\label{E:Riesum1}
S(f,P)=\sum_{i=1}^k f(t_i)(x_i-x_{i-1}) \qquad \cite{Bar01}.
\end{equation}
%\end{definition}

\begin{notation}\label{N:RieSum1}
We will often write the right hand side of (\ref{E:Riesum1}) as $\sum_{i=1}^k f(t_i)l(I_i)$, where~$l(I_i)=(x_i-x_{i-1})$ for $i=1,2, \dots,k$.

\end{notation}

%\begin{definition}\label{D:SeqHen}
\noindent \textbf{Sequential Henstock Integral:}  A function $f:I \to \mathbb{R}$ is \emph{Sequential Henstock integrable} on $I$ if there exists a number $A$ and a sequence of positive functions $\{\delta_n(x)\}_{n=1}^\infty$ such that
for every $\delta_n(x)$-fine tagged partition $P_n$, we have

\begin{equation}\label{E:Seq}
S(f,P_n)=\sum_{i=1}^{m_n \in \mathbb{N}} f(t_{i_n})(x_{i_n}-x_{{(i-1)}_n}) \longrightarrow A \text{ as } n \longrightarrow \infty.
\end{equation}

\noindent We say that $A$ is the Sequential Henstock integral of $f$ on $I$ with $A=\int_If$ \cite{Lee08}.
\newline
%\end{definition}

%\begin{definition}\label{D:taggedpartition}

\noindent \textbf{Tagged Partition:} A \emph{tagged partition} $P$ of an interval $I$ is an ordered pair consisting of a partition $p=\{x_i\}_{i=0}^k$ of $I$ together with a set $\{t_i\}_{i=1}^k$ of points
such that $x_{i-1}\leq t_i \leq x_i$ for each $i$, and we write $P=\{([x_{i-1}, x_i], t_i)\}_{i=1}^k$ \cite{Bar01}.
\newline
%\end{definition}

%\begin{definition}\label{D:totalboundedness}

\noindent \textbf{Total Boundedness:} A metric space $(X,d)$ is said to be \emph{totally bounded} if for every $\epsilon >0$, there exists a finite covering of $X$ by $\epsilon$-balls; i.e. there are a finite number of open sets (balls) of radius $\epsilon$ whose union is equal to $X$ \cite{Mun00}.

%\end{definition}
}

{\large

\begin{center}
\textbf{Useful Lemmas \& Theorems for Sequential Henstock Integrable Functions}
\end{center}

\noindent Note: While we have adapted many of the following lemmas and theorems for Sequential Henstock integrals, the source(s) noted for each contain(s) the theorems for Lebesgue or Henstock integrals, where appropriate.
\newline
%\begin{theorem}\label{T:cauchy}

\noindent \textbf{Cauchy Criterion for Sequential Henstock Integrability:} Let $f:I \to \mathbb{R}$. Then $f$ is Sequential Henstock integrable on $I$ if and only if for every $\epsilon>0$, there exists a $\delta_N(x) \in \{\delta_n(x)\}_{n=1}^\infty$ such that for $n\geq N$ and for all $\delta_n(x)$-fine tagged partitions $P_n$ and $Q_n$ of $I$ we have
\[
|S(f,P_n)-S(f,Q_n)|<\epsilon \qquad \cite{Bar01}.
\]

%\end{theorem}

%\begin{lemma}\label{L:Cousin}

\noindent \textbf{Cousin's Lemma:} For each gauge $\delta(x)$ defined on $I$, there exists a $\delta(x)$-fine tagged partition of $I$ \cite{Bar01}.
%\end{lemma}
\newline

%\begin{theorem}\label{Dominated}
\noindent \textbf{Dominated Convergence Theorem:} Let $\{f_k\}$ be a sequence of Sequential Henstock integrable functions defined on $I=[a,b]$ that converges pointwise to a limit function $f:[a,b]\to \mathbb{R}$. Suppose that there exist functions $\alpha, \omega \in H^*(I)$ such that
\[
\alpha(x) \leq f_k(x) \leq \omega(x) \qquad \text{for } x \in I, k \in \mathbb{N}.
\]
Then $f\in H^*(I)$ and $$\int_If=\lim_{k \to \infty} \int_If_k  \qquad \cite{Bar01}, \cite{Rud76}.$$

%\end{theorem}

%\begin{theorem}\label{Fatou}
\noindent \textbf{Fatou's Lemma:} Let $f_k, \alpha \in H^*(I)$ be such that

\[
\alpha(x) \leq f_k(x) \qquad \text{for } x \in I, k \in \mathbb{N}.
\]
and that 
\[
\liminf_{k\to \infty}\int_If_k<\infty.
\]
\newline
Then $\liminf_{k\to \infty}f_k$ belongs to $H^*(I)$ and

\[
-\infty < \int_I\liminf_{k\to \infty}f_k \leq \liminf_{k\to \infty}\int_If_k < \infty.
\]

\noindent \emph{Note that $\inf$ denotes the infimum of a set and $\liminf$ denotes the limit inferior of a set.} \cite{Bar01},~\cite{Rud76}.

%\end{theorem}

%\begin{theorem}\label{Heine}
\noindent \textbf{Heine-Borel Theorem:} A set $K \subseteq \mathbb{R}$ is compact if and only if it is closed and bounded \cite{Abb01}. 
\newline
%\end{theorem}

%\begin{lemma}\label{henstock}
\noindent \textbf{Henstock's Lemma:} Let $f:[a,b]\to \mathbb{R}$ be a Sequential Henstock integrable function. If
$P_n^*=\{([x_{i-1}, x_i], t_i)\}_{i=1}^p$ is any $\delta_n(x)$-fine set of disjoint, tagged subintervals of $[a,b]$, then
\begin{equation}\label{hen1}
\left| S(f,P^*_n)-\sum_{i=1}^p \int_{x_{i-1}}^{x_i}f \right| \leq \epsilon.
\end{equation}

\noindent Further,
\begin{equation}\label{hen2}
\sum_{i=1}^p \left| f(t_i)(x_i-x_{i-1})- \int_{x_{i-1}}^{x_i}f \right| \leq 2\epsilon \qquad \cite{Bar01}.
\end{equation}
%\end{lemma}
\newline

%\begin{theorem}\label{mono}

\noindent \textbf{Monotone Convergence Theorem:} Let $\{f_k\}$ be a monotone sequence of Sequential Henstock integrable functions defined on $I=[a,b]$ that converges pointwise to a limit function $f:[a,b]\to \mathbb{R}$. If $\lim_{k \to \infty} \int_If_k$ exists, then $f\in H^*(I)$ and $$\int_If=\lim_{k \to \infty} \int_If_k \qquad \cite{Bar01}, \cite{Rud76}.$$
%\end{theorem}
\newline

%\begin{theorem}\label{uniform}
\noindent \textbf{Uniform Convergence Theorem:} If $\{f_k\} \in H^*(I)$ converges to $f$ uniformly on $I$, then $f\in H^*(I)$ and 
$$\int_If=\lim_{k \to \infty} \int_If_k \qquad \cite{Bar01}, \cite{Rud76}.$$ 

%\end{theorem}
}

\section*{Common Integrals}
{\large

\begin{center}
\textbf{Riemann Integral}
\end{center}

We now highlight two key limitations of the Riemann integral. Lebesgue (1901) proved that in order for a bounded function to be Riemann integrable, its points of discontinuity must be of measure zero \cite{Abb01}. So, we immediately can find a large class of functions that are not Riemann integrable, such as Dirichlet's function (\ref{dirichlet}), which is discontinuous at every point in $\mathbb{R}$ \cite{Abb01}.

\begin{equation}\label{dirichlet}
f(x)= \left \{ 
\begin{array}{lr} 
1 &x\in \mathbb{Q} \\
0 &x\notin \mathbb{Q} \\
\end{array} 
\right.
\end{equation}
\bigskip

A further complication resulting from the limitations of the Riemann integral involves Part I of the Fundamental Theorem of Calculus: If $f:[a,b]\to \mathbb{R}$ is integrable (in the Riemann sense), and $F:[a,b]\to \mathbb{R}$ satisfies $F'(x)=f(x)$ for all $x\in [a,b]$, then 

\begin{equation}\label{FTC}
\int_a^bf=F(b)-F(a) \qquad \cite{Abb01}.
\end{equation}

Under Riemann integration, the hypothesis requires $f$ to be Riemann integrable or else (\ref{FTC}) does not hold. This limits the class of functions to which we can apply (\ref{FTC})  when using Riemann integration. (See \cite{Abb01}, pp. 207-210, for an example of a differentiable function whose derivative is non-Riemann integrable.) 

\begin{center}
\textbf{Lebesgue Integral}
\end{center}

Lebesgue (1902) introduced the current integral of choice in advanced mathematics, \\ allowing for a much larger class of integrable functions and especially for stronger convergence theorems for sequences of integrable functions, such as the Monotone Convergence Theorem and Dominated Convergence Theorem~\cite{Swa01},~\cite{Tho13}. Nevertheless, the Lebesgue integral does have its limitations \cite{Abb01} (although some would argue that these do not interfere with its intended purpose of ensuring completeness in certain metric function spaces through its powerful convergence theorems \cite{Ada13}, \cite{Rek12}). 

First, in order to be Lebesgue integrable, not only must a function $f$ be Lebesgue integrable, but so must its absolute value $|f|$. In other words, Lebesgue integrable functions must be absolutely integrable \cite{Bar01}. Regarding the Fundamental Theorem of Calculus (\ref{FTC}), there are derivatives that are not Lebesgue integrable, and there are also functions that have an improper Riemann integral defined that are not Lebesgue integrable~\cite{Abb01}. 

One other potentially limiting aspect of Lebesgue's integration theory is that it is, in general, rather complicated and requires significant preparation for students learning the theory. Thus, it is rarely taught at the undergraduate level, and the Riemann integral is still the focus there due to its more simplistic definition and easy-to-visualize geometry \cite{Lee08}, \cite{Mac14}, \cite{Mcs73}.

\begin{center}
\textbf{Denjoy, Henstock \& Other Integrals}
\end{center}

Not long after Lebesgue published his theory of integration in 1904, other integration theorists developed methods to integrate functions that were not absolutely integrable \cite{Sul11}. Denjoy sought to integrate functions of extreme oscillation, such as the following example that can be seen to oscillate more and more rapidly as we approach the origin from the right, tending to infinity as $x$ tends to 0. This integral is neither Riemann integrable nor Lebesgue integrable, yet as we shall see, it is Henstock and Sequential Henstock integrable.

\begin{equation}\label{denjoy}
f(x)=\int_0^1\frac{1}{x}\sin{\frac{1}{x^3}}dx \qquad \cite{Sul11}
\end{equation}

\indent In 1912 he published what became known as the Denjoy integral. Other notable integrals of a similar nature are the Luzin (1912), Perron (1914), and Fenyman (1948) integrals \cite{Sul11}. Henstock, Lee, Muldowney, Gordon, and others have since shown that all of these integrals along with others, such as the McShane, the Ito, the Haar, and the Kunugi integrals, are equivalent to the Henstock integral in that they can integrate the same classes of functions on $\mathbb{R}$ and yield the same values for the integral  \cite{Gor94}, \cite{Hen91}, \cite{Lee89}, \cite{Mul00}, \cite{TC05}. 

In fact, Riemann integrals and Lebesgue integrals have been shown to be special cases of the Henstock integral (over $\mathbb{R}$) \cite{Gor94}, \cite{LV00}, \cite{Mcs73}, \cite{Pfe93}. Thus, while the Henstock integral does not generalize to abstract spaces as readily or completely as the Lebesgue integral \cite{Sch09} (for reasons we will see below), the fact that the Lebesgue integral can be derived from the Henstock integral ensures a unified theory of integration whether over the real line or abstract function spaces \cite{Gor94}, \cite{Mcs73}, \cite{Pfe93}.

}
%\chapter{Equivalence of the Sequential Henstock Integral to Other Henstock Integrals}
\section*{The Riemann Integral as a Limit of Sums}

{\large
For the convenience of the reader, we now restate several definitions and an important lemma as shown in \cite{Bar01}, \cite{Swa01}, and \cite{Wel11} 
aimed at presenting the Riemann integral formally
as a limit of Riemann sums using 
an $\epsilon - \delta$ definition, from which the definition of the Henstock integral in the glossary is derived. We then state the Sequential Henstock integral definition along with two additional definitions of the 
Henstock integral and show that the Sequential Henstock 
integral is equivalent to each of the three other definitions stated for the Henstock integral.
\newline

\noindent \textbf{Notation \ref{N:Seq.Def}.}
Throughout this paper, let $I$ denote a
compact interval $[a, b] \subset \mathbb{R}$ and let \\ $i, j, k, m, n, p\in \mathbb{N}$. We denote the class of \emph{Sequential Henstock integrable} functions on $I$ as $H^*(I)$.

\begin{definition}\label{D:partition}

A \emph{partition} $p$ of an interval $I$ is a finite set of points $\{x_i\}_{i=0}^k$ such that 

\begin{center} $a=x_0<x_1< \dots <x_{k-1}<x_k=b$. \end{center}

\end{definition}

\begin{definition}\label{D:taggedpartition}

A \emph{tagged partition} $P$ of an interval $I$ is an ordered pair consisting of a partition $p=\{x_i\}_{i=0}^k$ of $I$ together with a set $\{t_i\}_{i=1}^k$ of points
such that $x_{i-1}\leq t_i \leq x_i$ for each $i$, and we write $P=\{([x_{i-1}, x_i], t_i)\}_{i=1}^k$.

\end{definition}

\begin{definition}\label{D:RiemannSum}

Let $f:I \to \mathbb{R}$ and let $P=\{([x_{i-1}, x_i], t_i)\}_{i=1}^k$ be a tagged partition of $I$. Then the \emph{Riemann sum} of $f$ on $P$ is defined by
\begin{equation}\label{E:Riesum}
S(f,P)=\sum_{i=1}^k f(t_i)(x_i-x_{i-1}).
\end{equation}
\end{definition}

\noindent \textbf{Notation \ref{N:RieSum1}.}
We will often write the right hand side of (\ref{E:Riesum}) as $\sum_{i=1}^k f(t_i)l(I_i)$, where~$l(I_i)=(x_i-x_{i-1})$ for $i=1,2, \dots,k$.

\begin{definition}\label{D:deltafine}

Let $\delta>0$. A partition $p$ is \emph{$\delta$-fine} if every subinterval 
$[x_{i-1}, x_i]$ satisfies $x_i-x_{i-1}<\delta$.

\end{definition}

\begin{definition}\label{D:RieInt}

\textbf{\emph{Riemann Integral.}} A function $f:I \to \mathbb{R}$ is \emph{Riemann integrable} on $I$ if there exists a number $A$ such that for each $\epsilon>0$ there exists a $\delta>0$ such that for all $\delta$-fine tagged partitions $P$ of $I$, we have $|S(f,P)-A|<\epsilon$. We say that $A$ is the Riemann integral of $f$ on $I$ with $A=\int_If$ \emph{\cite{Bar01}}.

\end{definition}
}
{\large
We have now defined the Riemann integral as a limit of Riemann sums associated with a sequence of $\delta$-fine tagged partitions, where the width of each interval $[x_{i-1}, x_i]$ in a given $\delta$-fine partition is less than the constant $\delta$. Yet, it turns out that wherever a function behaves badly, such as the high oscillation of the function in (\ref{denjoy}) as it approaches the origin, these constant $\delta'$s cannot always account for this behavior. Hence many functions are not Riemann integrable, e.g., those with sets of discontinuities greater than measure zero \cite{Sul11}, \cite{Wel11}.

However, with an adjustment to this $\delta$, we can generalize the Riemann integral to handle a larger class of functions than the original Riemann integral or even the Lebesgue integral. The key is to redefine  
$\delta$ as a positive, real-valued function of the point within each interval that has been tagged, which we call the tag. So instead of $\delta$-fine tagged partitions, we have $\delta(x)$-fine tagged partitions \cite{Abb01}, \cite{Bar96}, \cite{Gor94}, \cite{Lee08}, \cite{Swa01}, \cite{Wel11}. In this case, we now pick the tag points first, and based on the values of $\delta(x)$ at these points, we then obtain our intervals within each tagged partition. In this way, we can more carefully refine our intervals within each partition to be arbitrarily narrow wherever a function behaves badly and thus are able to integrate a larger class of functions on $\mathbb{R}$ with the Henstock integral \cite{Abb01}.

So, in looking at our examples, (\ref{dirichlet}) and (\ref{denjoy}), we see that by making the width of the gauge arbitrarily small at the points of discontinuity in the case of (\ref{dirichlet}) 
\begin{equation*}
f(x)= \left \{ 
\begin{array}{lr} 
1 &x\in \mathbb{Q} \\
0 &x\notin \mathbb{Q} \\
\end{array} 
\right.
\end{equation*}

\noindent or at the points of extreme oscillation in the case of  (\ref{denjoy})

%\begin{equation}\label{denjoy}
\[
f(x)=\int_0^1\frac{1}{x}\sin{\frac{1}{x^3}}dx,
\]
%\end{equation}

\noindent  we can make the contributions to the Riemann sums arbitrarily small from the subintervals at these badly-behaving points. Therefore, while these two functions are not Riemann integrable \cite{Abb01}, \cite{Sul11}, they are $\epsilon-\delta$ Henstock integrable \cite{Bar01}, \cite{Lee89}, \cite{Sul11}, and  they are also Sequential Henstock integrable due to the equivalence of these two integral definitions, as shown in Theorem \ref{T:e/d.equiv} below. Further, we show directly that (\ref{dirichlet}) is Sequential Henstock integrable in the  example after Definition \ref{D:SeqHen}.

}

{\large
\begin{definition}\label{D:gauge}

A \emph{gauge} is a function $\delta:I \to \mathbb{R}$ such that 
$\delta(x)>0$ for each $x \in I$.

\end{definition}

\begin{definition}\label{D:d(x)fine}
 Let $\delta(x)>0$. A tagged partition $P=\{([x_{i-1}, x_i], t_i)\}_{i=1}^k$ is \emph{$\delta(x)$-fine} if every subinterval 
$[x_{i-1}, x_i]$ satisfies $x_i-x_{i-1}<\delta(t_i)$
\end{definition}

Note: The following lemma provides the foundation for the existence of the Henstock integral defined in Definition \ref{D:e/d} and the Sequential Henstock integral defined in Definition \ref{D:SeqHen}.

\begin{lemma}\label{L:Cousin}

\textbf{\emph{Cousin's Lemma.}} For each gauge $\delta(x)$ defined on $I$, there exists a $\delta(x)$-fine tagged partition of $I$.
\end{lemma}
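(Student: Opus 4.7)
The plan is to prove Cousin's Lemma by contradiction using repeated bisection, exploiting the completeness of $\mathbb{R}$ (specifically, the nested interval property it guarantees). The key observation that makes the argument go through is that $\delta(x)$-fine tagged partitions concatenate: if $c \in (a,b)$ and both $[a,c]$ and $[c,b]$ admit $\delta(x)$-fine tagged partitions, then their union is a $\delta(x)$-fine tagged partition of $[a,b]$. Contrapositively, if $[a,b]$ admits no $\delta(x)$-fine tagged partition, then at least one of $[a,c]$ or $[c,b]$ does not either.

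With that observation in hand, I would suppose, for contradiction, that $I = [a,b]$ admits no $\delta(x)$-fine tagged partition. Set $I_0 = I$ and bisect at the midpoint; by the observation, at least one of the two halves, call it $I_1$, admits no $\delta(x)$-fine tagged partition. Repeating this bisection produces a nested sequence of closed subintervals $I_0 \supseteq I_1 \supseteq I_2 \supseteq \cdots$ such that the length of $I_n$ equals $(b-a)/2^n$ and no $I_n$ admits a $\delta(x)$-fine tagged partition.

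Next I would invoke the nested interval property (a consequence of the completeness of $\mathbb{R}$ listed in the glossary) together with the fact that the lengths shrink to $0$ to conclude that $\bigcap_{n=0}^{\infty} I_n = \{c\}$ for a unique point $c \in I$. Because $\delta$ is a gauge, $\delta(c) > 0$, so I can pick $N$ large enough that $(b-a)/2^N < \delta(c)$. Since $c \in I_N$ and $I_N$ has length less than $\delta(c)$, the single pair $P = \{(I_N, c)\}$ is a tagged partition of $I_N$ whose lone subinterval satisfies the $\delta(x)$-fine condition with tag $c$. This contradicts the choice of $I_N$ as an interval admitting no $\delta(x)$-fine tagged partition, completing the proof.

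I do not anticipate a serious obstacle; the argument is structurally routine once the concatenation observation is isolated. The only point that deserves care is verifying that the trivial one-interval partition $\{(I_N, c)\}$ really does qualify as a $\delta(x)$-fine tagged partition in the sense of Definition \ref{D:d(x)fine}, which requires checking that $c \in I_N$ (so it is a legal tag) and that the length of $I_N$ is strictly less than $\delta(c)$ (which is exactly how $N$ was chosen). Once that bookkeeping is handled, the contradiction is immediate.
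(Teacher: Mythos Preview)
Your bisection argument is correct and is the standard proof of Cousin's Lemma. The paper does not actually give its own proof of this lemma; it simply writes ``The proof is available in \cite{Bar01} and will not be reproduced here,'' and the argument in Bartle is precisely the nested-interval bisection you outline, so your proposal matches the intended reference.
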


\noindent The proof is available in \cite{Bar01} and will not be reproduced here.

It is worth noting here that Cousin's Lemma is intricately tied to the completeness of the real line and the fact that the intervals in this case are compact. In fact, Cousin's Lemma can be shown to be equivalent to that of completeness (in conjunction with total boundedness) as well as to the Heine-Borel Theorem in $\mathbb{R}$ \cite{Wel11}. In other words, if for a given set the lemma holds, then the set is also complete, totally bounded, and compact. It is this relationship between the existence of gauges and the real line that provides the foundation for the Henstock integral, as we shall see in the definition below. However, it is also what makes it more difficult to transfer into abstract spaces than, for example, the Lebesgue integral, which readily transfers to, and in many ways is even more naturally defined in, abstract spaces \cite{Sch09}.

\section*{The $\epsilon-\delta$ Henstock Integral \& the Sequential Henstock Integral}

We now define several variations of the Henstock integral and show their equivalence to the Sequential Henstock integral. Definition \ref{D:e/d} is the standard definition for the Henstock integral \cite{Abb01}, \cite{Bar01},  \cite{Swa01}, \cite{Wel11}, and Definition \ref{D:SeqHen} is from Lee  \cite{Lee08}.

\begin{definition}\label{D:e/d}

\textbf{\emph{$\epsilon-\delta$ Henstock Integral.}} A function $f:I \to \mathbb{R}$ is \emph{Henstock integrable} on $I$ if there exists a number $A$ such that for each $\epsilon>0$ there exists a $\delta(x)>0$ such that for all $\delta(x)$-fine tagged partitions $P$ of $I$, we have $|S(f,P)-A|<\epsilon$. We say that $A$ is the Henstock integral of $f$ on $I$ with $A=\int_If$.

\end{definition}

\begin{definition}\label{D:SeqHen}
\emph{\textbf{Sequential Henstock Integral}}. A function $f:I \to \mathbb{R}$ is \emph{Sequential Henstock integrable} on $I$ if there exists a number $A$ and a sequence of positive functions $\{\delta_n(x)\}_{n=1}^\infty$ such that
for every $\delta_n(x)$-fine tagged partition $P_n$, we have  

\begin{equation}\label{E:Seq}
S(f,P_n)=\sum_{i=1}^{m_n \in \mathbb{N}} f(t_{i_n})(x_{i_n}-x_{{(i-1)}_n}) \longrightarrow A \text{ as } n \longrightarrow \infty.
\end{equation}

\noindent We say that $A$ is the Sequential Henstock integral of $f$ on $I$ with $A=\int_If$.

\end{definition}

Note that in both definitions above of the Henstock integral, when $\delta(x)$ is a constant function we simply have the definition of the Riemann integral, thus clearly demonstrating that the Riemann integral is a special case of the Henstock integral, as previously noted \cite{Abb01}, \cite{Bar96}, \cite{Swa01}, \cite{Wel11}. We may also assume that the sequence of gauges is decreasing such that $d_{n+1}(x)<d_n(x)$ for all $x\in I.$

\begin{notation}\label{N:RieSum}
We let $P \ll \delta(x)$ mean that $P$ is a $\delta(x)$-fine tagged partition of an interval \\ \noindent $I=[a,b] \subset  \mathbb{R}$. 
Further, we let $P_n \ll \delta_n(x)$ mean that  for a sequence of tagged partitions $\{P_n\}_{n=1}^\infty$ and a sequence of positive 
$\delta(x)$ functions $\{\delta_n(x)\}_{n=1}^\infty$, each
 $P_n$ is a $\delta_n(x)$-fine tagged partition for $n=1,2,3,\dots$.

\end{notation}

\noindent \textbf{Example: Integrating Dirichlet's Function} \cite{Bar01}, \cite{Lee89}

\noindent We now consider Dirichlet's nowhere-continuous function from (\ref{dirichlet}) and show that it is Sequential Henstock integrable on $I=[0,1]$  although, as noted above, it is not Riemann integrable anywhere on $\mathbb{R}$.

\begin{equation*}\label{dirichlet1}
f(x)= \left \{ 
\begin{array}{lr} 
1  \quad \text{      if       } \quad  x\in [0,1] \text{ is rational} \\
0  \quad \text{      if       }  \quad  x\in [0,1] \text{ is irrational}  
\end{array} 
\right.
\end{equation*}

Let \{$q_m : m\in \mathbb{N}\}$ be an enumeration of the rational numbers in $I=[0,1]$ and let $\epsilon>0.$  Now, let $\{\delta_n(x)\}_{n=1}^\infty$ be a sequence of decreasing (i.e. finer) gauges on $[0,1]$, meaning that $\delta_n(x)<\delta_{n+1}(x)$ for all $x\in I$.  The effect of this condition is that for any tag point $t_i\in [x_{i-1}, x_i]$, the width of each subinterval $I_i=[x_{i-1}, x_i]$ for $i = 1, 2, ...,k$ of any $\delta_n(x)$-fine tagged partition $P_n=\{([x_{i-1}, x_i], t_i)\}_{i=1}^k$ is decreasing for each $\delta_n(x)$ in the sequence. This is because, in practice, we can define $I_i$ in the following manner:
\[
I_i \subseteq [t_i-\tfrac{1}{2}\delta_n(t_i),t_i+\tfrac{1}{2}\delta_n(t_i)] \qquad \text{ for } i = 1, 2, ...,k.
\]
Note that throughout this example, when we write $t_i$ and $I_i$, they belong to a specific tagged partition $P_n$ of $[0,1]$, but for sake of clarity in the notation, we shall refrain from indicating this each time.

We next want to define a gauge in $\{\delta_n(x)\}_{n=1}^\infty$ that will give us a bound on the Riemann sums of $f(x)$ in such a way that all finer gauges will give an even smaller Riemann sum. So, if $t$ is a tag point, we pick $N$ large enough from our sequence of gauges and define

\begin{equation}\label{dirgauge}
 \delta_N(t)= \begin{cases} \epsilon/2^m  &\mbox{if } t = q_m, \\
1  \quad &\mbox{if } t \mbox{ is irrational.}  \end{cases}
\end{equation}

\noindent Now, let $P_n \ll \delta_n(x)$ on $[0,1]$ for $n\geq N$. If the tag $t_i\in I_i$ is irrational, then $f(t_i)=0$ and the contribution of this subinterval (defined using $t_i$) to the Riemann sum is 0.

If $t_i$ is rational, then $f(t_i)=1$, but the contribution to the Riemann sum can be made arbitrarily small since $P_n \ll \delta_n(x)$, meaning that the width of each subinterval $I_i$, which we denote by $l(I_i)=x_i-x_{i-1}$, can be made arbitrarily small for $i = 1, 2, ...,k$. Since the area from each ``strip" will then be $1 \cdot l(I_i)$, we can thus control the area contribution to the Riemann sum of any tag points in $P_n$ that are rational points in $[0,1]$ by defining a subinterval of arbitrarily small width around each one. 

Note that each $P_n$ contains only a finite number of tag points and subintervals by definition, which necessarily means that we do not need to consider every rational or irrational point in $[0,1]$, only those that happen to be in any given $P_n$. There are, however, an infinite number of ways to tag any partition $p$ of any interval $I=[a,b]\subset \mathbb{R}$ \cite{Bar01}.

\bigskip

\noindent More precisely, we have two cases: \\
\indent \emph{Case 1.} If the $m$th rational number $q_m$ is the tag for the interval $I_i$, then 
\[
I_i \subseteq [q_m-\tfrac{1}{2}\delta_n(q_m),q_m+\tfrac{1}{2}\delta_n(q_m)]
\]
so that $l(I_i)\leq \delta_n(q_m)\leq \epsilon/2^m$. 

\emph{Case 2.} If $q_m$ is the tag for any two consecutive (non-overlapping) subintervals $I_i$ and $I_{i+1}$ in $P_n$, meaning that $q_m$ is an endpoint for both, then 
\[
l(I_i)+l(I_{i+1}) \leq \epsilon/2^m
\]
\noindent since in this case 
\[
I_i \subseteq [q_m-\tfrac{1}{2}\delta_n(q_m),q_m] \quad \Longrightarrow \quad l(I_i)\leq \tfrac{1}{2}\delta_n(q_m)
\]
\noindent and
\[
I_{i+1} \subseteq [q_m,q_m+\tfrac{1}{2}\delta_n(q_m)] \quad \Longrightarrow \quad l(I_i)\leq \tfrac{1}{2}\delta_n(q_m).
\]

Therefore, the area contribution to the Riemann sum $S(f,P_n)$ of any ``strip" around a rational point $t_i$ in $f(x)$ is at most $f(t_i) \cdot l(I_i) \leq \epsilon/2^m.$ However, since irrational points make no contribution to $S(f,P_n)$, we can put a bound on the Riemann sum such that
\[
|S(f,P_n)-0| = |S(f,P_n)|\leq \sum_{m=1}^\infty \frac{\epsilon}{2^m} =\epsilon \cdot 1 = \epsilon.
\]
As $\epsilon$ is arbitrary; and  if it is defined, the Sequential Henstock integral of $f(x)$ on $[0,1]$ satisfies $|S(f,P_n)-\int_0^1f|< \epsilon$, then we conclude that Dirichlet's function is Sequential Henstock integrable on $[0,1]$ and that $\int_0^1f=0.$ Further, by Theorem \ref{T:e/d.equiv} that follows, $f(x)$ is also Henstock integrable on $[0,1]$.

\bigskip

\begin{theorem}\label{T:e/d.equiv}

The $\epsilon-\delta$ Henstock integral is equivalent to the Sequential Henstock integral on $I \in \mathbb{R}$.

\end{theorem}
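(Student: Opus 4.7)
The plan is to prove both implications separately, and in each case verify that the numerical value $A$ of the integral agrees. The forward direction (Henstock $\Rightarrow$ Sequential Henstock) is a straightforward extraction of a sequence of gauges from the $\epsilon$-$\delta$ definition, while the reverse direction (Sequential Henstock $\Rightarrow$ Henstock) proceeds by contradiction, exploiting Cousin's Lemma to construct an offending sequence of partitions.

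For the forward direction, assume $f$ is $\epsilon$-$\delta$ Henstock integrable with value $A$. For each $n\in\mathbb{N}$, apply Definition \ref{D:e/d} with $\epsilon = 1/n$ to obtain a gauge $\delta_n(x)$ such that every $\delta_n(x)$-fine tagged partition $P$ satisfies $|S(f,P)-A|<1/n$. If one wishes the sequence to be decreasing, simply replace $\delta_n(x)$ by $\min\{\delta_1(x),\dots,\delta_n(x)\}$; any partition fine with respect to this minimum is still $\delta_n(x)$-fine in the original sense, so the error bound $1/n$ persists. Cousin's Lemma guarantees that $\delta_n(x)$-fine tagged partitions $P_n$ exist for each $n$, and for any such sequence $\{P_n\}$ with $P_n \ll \delta_n(x)$ we have $|S(f,P_n)-A|<1/n \to 0$, establishing $S(f,P_n)\to A$.

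For the reverse direction, suppose $f$ is Sequential Henstock integrable with integral $A$ via a sequence of gauges $\{\delta_n(x)\}$. Fix $\epsilon>0$; the claim is that some $\delta_N(x)$ in this sequence already serves as a gauge in the sense of Definition \ref{D:e/d}. Suppose for contradiction that no such index $N$ exists. Then for every $n$ there is a $\delta_n(x)$-fine tagged partition $Q_n$ with $|S(f,Q_n)-A|\geq \epsilon$ (existence of at least one $\delta_n(x)$-fine partition is Cousin's Lemma; negation of the claim furnishes a bad one). The sequence $\{Q_n\}$ then satisfies $Q_n \ll \delta_n(x)$ for every $n$, so the Sequential Henstock hypothesis forces $S(f,Q_n)\to A$, contradicting $|S(f,Q_n)-A|\geq \epsilon$ for all $n$. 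Hence there is an $N$ such that $\delta(x):=\delta_N(x)$ witnesses $\epsilon$-$\delta$ Henstock integrability, with the same value $A$.

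The main potential pitfall is not technical depth but a quantifier subtlety: Definition \ref{D:SeqHen} must be read as asserting that \emph{any} sequence of $\delta_n(x)$-fine tagged partitions has Riemann sums tending to $A$, not merely that some particular sequence does. Under this reading the adversarial sequence $\{Q_n\}$ built in the contradiction step is a legitimate witness for the hypothesis, which is what drives the argument. No deeper tool than Cousin's Lemma (Lemma \ref{L:Cousin}) is required.
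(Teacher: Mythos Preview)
Your proof is correct. The forward direction is essentially identical to the paper's: both extract a sequence of gauges by plugging a null sequence of $\epsilon$'s into Definition~\ref{D:e/d} (the paper uses rational $\epsilon_n$, you use $1/n$, which is cleaner and amounts to the same thing).

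The reverse direction, however, is genuinely different. The paper argues directly: given $\epsilon>0$ and a target gauge $\delta(x)$, it seeks a $\delta_N(x)$ in the sequence with $|\delta(x)-\delta_N(x)|<\lambda$ uniformly on $I$, lets $\lambda\to 0$, and then compares Riemann sums for $P\ll\delta(x)$ and $P_{N^*}\ll\delta_{N^*}(x)$ via a triangle inequality. Your argument instead proceeds by contradiction: if no $\delta_N(x)$ from the sequence works as an $\epsilon$-$\delta$ gauge, you harvest a bad partition $Q_n$ for each $n$ and observe that the resulting sequence $\{Q_n\}$ violates the Sequential Henstock hypothesis. Your route is more economical and avoids the delicate issue of whether a member of $\{\delta_n(x)\}$ can actually be found uniformly close to an arbitrary gauge $\delta(x)$; it also makes transparent exactly which quantifier in Definition~\ref{D:SeqHen} is doing the work, a point you rightly flag at the end.
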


\begin{proof}
We assume that $\{\delta_n(x)\}_{n=1}^\infty$ is a decreasing sequence of gauges such that $\delta_{n+1}(x)<\delta_n(x)$ for all $x\in I$. We shall assume this as needed without stating it in subsequent proofs.

$\Rightarrow$ Suppose that $f:I \to \mathbb{R}$ is integrable in the sense of Definition \ref{D:e/d}. 
Let $\epsilon>0$. There exists a $\delta(x)>0$ such that if $P \ll \delta(x)$, we have 

\begin{equation}\label{E:Rsum}
|S(f,P)-A|<\epsilon.
\end{equation}

For $n=1,2,3,\dots$, let $\epsilon_n$ be a rational $\epsilon$ (as in the definition) such that $0<\epsilon<1$. By Definition \ref{D:e/d}, there exists a $\delta_n(x)$ for each $\epsilon_n$ satisfying (\ref{E:Rsum}). Since $\mathbb{Q}$, 
the set of rational numbers, is countable, $\{\delta_n(x)\}_{n=1}^\infty$ is a sequence. Then, given any $\epsilon>0$,
there exists a $\delta_N(x) \in \{\delta_n(x)\}_{n=1}^\infty$ such that if $n \geq N$ and $P_n$ is any $\delta_n(x)$-fine partition of $I$, then $|S(f,P_n)-A|<\epsilon$. Thus Definition \ref{D:e/d} implies Definition \ref{D:SeqHen}.

$\Leftarrow$ Now, suppose that $f:I \to \mathbb{R}$ is integrable in the sense of Definition \ref{D:SeqHen}. 
There exists a sequence of gauges $\{\delta_n(x)\}_{n=1}^\infty$ such that if $P_n$ is any $\delta_n(x)$-fine partition of $I$, then $|S(f,P_n)-A|<\tfrac{1}{n}$. Let $\lambda>0$. Choose a $\delta_N(x)$ from $\{\delta_n(x)\}_{n=1}^\infty$ such that for a given $\delta(x)>0$, 
\begin{equation}\label{lambda}
|\delta(x)-\delta_N(x)|<\lambda \text{ for all } x \in I. 
\end{equation}
\newline
If we let $\lambda \to 0$, then our choice of $\delta_N(x)$
guarantees that if $P_N \ll \delta_N(x)$, then $P_N \ll \delta(x)$. Hence, for a given $P \ll \delta(x)$, we can make the Riemann sums for $P$ and $P_N$
arbitrarily close (using the same tags in each tagged partition) such that $|S(f,P)-S(f,P_N)|<\tfrac{\epsilon}{2}$. 
 
 Now, let $\epsilon>0$. For a given $\delta(x)>0$, we find a $\delta_N(x)$ satisfying (\ref{lambda}) above and move it down our sequence $\{\delta_n(x)\}_{n=1}^\infty$, denoting its new position in the sequence as $N^*$, so that $1/N^*<\epsilon/2$. Then if  $P \ll \delta(x)$, we have
 \begin{align*}
 |S(f,P)-A| &= |S(f,P)-S(f,P_{N^*})+S(f,P_{N^*})-A| \\
                 &\leq |S(f,P)-S(f,P_{N^*})|+|S(f,P_{N^*})-A| \\
                 &<\tfrac{\epsilon}{2}+\tfrac{1}{N^*}\\
                 &<\tfrac{\epsilon}{2}+\tfrac{\epsilon}{2}=\epsilon.
 \end{align*}
 \noindent Thus Definition \ref{D:SeqHen} implies Definition \ref{D:e/d}, and they are shown to be equivalent.

\end{proof}
\section*{The Darboux Henstock Integral}

\indent We now state the Darboux definition of the Henstock Integral. Definitions \ref{uppersum} and \ref{upperint} are based on Abbott's definitions for the Riemann integral \cite{Abb01}.
\begin{notation}\label{N:uppersum}
Let $H$ denote the set of all possible partitions of an interval $I=[a,b] \subset  \mathbb{R}$, and let $\beta$ denote
the set of all $\delta(x)$-fine partitions of $I$ for a given $\delta(x)>0$. Also, let $P$ denote an arbitrary $\delta(x)$-fine 
partition of $I$ as before.

\end{notation}

\begin{definition}\label{uppersum}

The \emph{lower sum} of $f$ with respect to $P$ is given by
\[
L(f,P)=\inf_{P \subset \beta} \sum_{i=1}^kf(t_i)(x_i-x_{i-1})
\]
Similarly, let the  \emph{upper sum} of $f$ with respect to $P$ be given by

\[
U(f,P)=\sup_{P \subset \beta} \sum_{i=1}^kf(t_i)(x_i-x_{i-1}) \qquad \emph{\cite{Abb01}}.
\]

\end{definition}

\begin{definition}\label{upperint}

The \emph{lower integral} of $f$ is given by
\[
\lowint_If=\sup_{\beta \in H}L(f,P)
\]

\noindent Similarly, let the  \emph{upper integral} of $f$ be given by

\[
\upint_If=\inf_{\beta \in H}U(f,P) \qquad \emph{\cite{Abb01}}.
\]

\end{definition}

\begin{definition}\label{D:Darboux}

\textbf{\emph{Darboux Henstock Integral.}} A function $f:I \to \mathbb{R}$ is \emph{Darboux Henstock integrable} on $I$ if for each $\epsilon>0$ there exists a $\delta(x)>0$ such that for all $\delta(x)$-fine tagged partitions $P$ of $I$, we have $|\upint_If-\lowint_If|<\epsilon$. In this case, we have $\lowint_If=\upint_If=\int_If$ \emph{\cite{Rud76}}.

\end{definition}

\begin{theorem}\label{T:darboux.equiv}

The Darboux Henstock integral is equivalent to the Sequential Henstock integral on $I=[a,b] \subset \mathbb{R}$.

\end{theorem}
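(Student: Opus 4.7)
The plan is to use Theorem \ref{T:e/d.equiv} as a bridge: since the $\epsilon-\delta$ Henstock integral is already equivalent to the Sequential Henstock integral, it suffices to prove that the Darboux Henstock integral of Definition \ref{D:Darboux} is equivalent to the $\epsilon-\delta$ Henstock integral of Definition \ref{D:e/d}. The key structural observation is that for any gauge $\delta(x)$, any associated family $\beta$ of $\delta(x)$-fine tagged partitions, and any $P \in \beta$, the Riemann sum satisfies $L(f,P) \le S(f,P) \le U(f,P)$ by Definition \ref{uppersum}, so squeezing $S(f,P)$ against the bounds $L$ and $U$ will do all the work.

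For the forward direction, I would assume $f$ is $\epsilon-\delta$ Henstock integrable with value $A$. Given $\epsilon>0$, pick $\delta(x)>0$ so that $|S(f,P)-A|<\epsilon/3$ for every $P \ll \delta(x)$. Taking the infimum and supremum over the entire family $\beta$ of $\delta(x)$-fine tagged partitions yields $A-\epsilon/3 \le L(f,P) \le U(f,P) \le A+\epsilon/3$, so $U(f,P)-L(f,P) < \epsilon$ and passing to sup/inf over $\beta \in H$ forces $\lowint_I f = \upint_I f = A$, giving Darboux Henstock integrability. By the chain through Theorem \ref{T:e/d.equiv}, Sequential Henstock integrability implies Darboux Henstock integrability with the same value.

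For the converse, assume $f$ is Darboux Henstock integrable with $\int_I f = A$. Given $\epsilon>0$, Definition \ref{D:Darboux} produces a gauge $\delta(x)$ with $U(f,P) - L(f,P) < \epsilon$ on the corresponding family $\beta$. For any $P \ll \delta(x)$, both $S(f,P)$ and $A$ lie in the interval $[L(f,P), U(f,P)]$ (the latter because $A = \lowint_I f = \upint_I f$ is pinned between $L$ and $U$ for the admissible gauges), so
\[
|S(f,P) - A| \le U(f,P) - L(f,P) < \epsilon,
\]
establishing $\epsilon-\delta$ Henstock integrability. Applying Theorem \ref{T:e/d.equiv} once more transfers this to Sequential Henstock integrability.

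The main obstacle is interpretive rather than computational: the notation in Definitions \ref{uppersum} and \ref{upperint} uses the symbol $P$ simultaneously for a specific tagged partition and as the index of variation inside the sup and inf, and $\beta$ depends implicitly on the chosen gauge $\delta(x)$. One must carefully unpack these dependencies so that the sandwich inequality $L(f,P) \le S(f,P) \le U(f,P)$ is applied with both sides taken over the same $\delta(x)$-induced family $\beta$. Once this bookkeeping is fixed, each direction collapses to a short squeeze argument, and the theorem follows by composing with Theorem \ref{T:e/d.equiv}.
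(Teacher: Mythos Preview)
Your overall strategy---bridge through Theorem~\ref{T:e/d.equiv} and squeeze $S(f,P)$ and $A$ between $L(f,P)$ and $U(f,P)$---is exactly the paper's approach, and your forward direction is essentially identical to what the paper does (the paper phrases it in sequential language, you in $\epsilon$--$\delta$ language, but the content is the same).

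There is one genuine slip in the converse. You write that ``Definition~\ref{D:Darboux} produces a gauge $\delta(x)$ with $U(f,P)-L(f,P)<\epsilon$.'' It does not: as literally stated, Definition~\ref{D:Darboux} only gives $\bigl|\upint_I f-\lowint_I f\bigr|<\epsilon$, and the quantities $\upint_I f,\lowint_I f$ do not depend on the particular gauge or partition at all. What you actually need is a \emph{specific} gauge for which the upper and lower sums over its $\beta$ are within $\epsilon$, and that requires an extra step the paper carries out: use $\upint_I f=\inf_\beta U(f,P)$ to choose $\delta_1$ with $U(f,P_1)<\upint_I f+\epsilon/2$, use $\lowint_I f=\sup_\beta L(f,P)$ to choose $\delta_2$ with $L(f,P_2)>\lowint_I f-\epsilon/2$, and then set $\delta(x)=\min\{\delta_1(x),\delta_2(x)\}$. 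Only then does the sandwich $L(f,P)\le S(f,P)\le U(f,P)$ together with $L(f,P)\le A\le U(f,P)$ yield $|S(f,P)-A|\le U(f,P)-L(f,P)<\epsilon$. Once you insert this gauge-construction step, your argument matches the paper's line for line.
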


\begin{proof}

$\Leftarrow$ Suppose that $f:I \to \mathbb{R}$ is integrable in the sense of Definition \ref{D:SeqHen}. Let $\epsilon>0$. 
 There exists a $\delta_N(x) \in \{\delta_n(x)\}_{n=1}^\infty$ such that if $n \geq N$ and $P_n \ll \delta_n(x)$, then $|S(f,P_n)-\int_If|<\epsilon$. Equivalently, we can say that the sequence $\{S(f,P_n)\} \rightarrow \int_If$ as $n \rightarrow \infty$. Therefore, the upper and lower bounds of the sequence must also converge to $\int_If$, and we have
 $L(f,P_n) \rightarrow \int_If$ and $U(f,P_n) \rightarrow \int_If$ as $n \rightarrow \infty$. 
 
 By the order relationship among the lower and upper bounds and lower and upper integrals of $f$, as $n \rightarrow \infty$ we have 
 \[
 \int_If=L(f,P_n)\leq \sup_{\beta \in H}L(f,P)= \lowint_If\leq\upint_If=\inf_{\beta \in H}U(f,P)\leq U(f,P_n)=\int_If
 \]

Thus, we have $\lowint_If \rightarrow \int_If$ and $\upint_If \rightarrow \int_If$ as $n \rightarrow \infty$ over the set of all $\delta(x)$'s and partitions on $I$. Now, let $\epsilon>0$. There exists a $\delta(x)>0$ such that for all $\delta(x)$-fine tagged partitions $P$ of $I$, we have $|\lowint_If-\int_If|<\tfrac{\epsilon}{2}$ and $|\upint_If-\int_If|<\tfrac{\epsilon}{2}$. This yields
 \begin{align*}
 \left| \upint_If- \lowint_If \right| &= \left| \upint_If- \int_If+ \int_If- \lowint_If \right| \\
          &< \tfrac{\epsilon}{2} + \tfrac{\epsilon}{2}= \epsilon.
\end{align*}
 
\noindent Thus Definition \ref{D:SeqHen} implies Definition \ref{D:Darboux}.

 $\Rightarrow$ Suppose that $f:I \to \mathbb{R}$ is integrable in the sense of Definition \ref{D:Darboux}. Let $\epsilon>0$. There exists a $\delta(x)>0$ such that for all $\delta(x)$-fine tagged partitions $P$ of $I$, we have 
 \[
 \left |\upint_If-\lowint_If \right|<\epsilon
 \]
 so that $\lowint_If=\upint_If=\int_If$. 
 
 Since $\upint_If$ is the greatest lower bound of the upper sums $U(f,P)$, there exists a $\delta_1(x)>0$ such that 
 for all $\delta_1(x)$-fine tagged partitions $P_1$ of $I$, 
 \[
 U(f,P_1)<\upint_If + \tfrac{\epsilon}{2}. 
 \]

 \noindent Similarly for $\lowint_If$, there exists a $\delta_2(x)>0$ such that for all $\delta_2(x)$-fine tagged partitions $P_2$ of $I$, 
 \[
 L(f,P_2)>\lowint_If - \tfrac{\epsilon}{2}. 
 \]

Now, choose $\delta(x)=min\{\delta_1(x),\delta_2(x)\}$ such that for every $\epsilon>0$, there exists a $\delta(x)>0$ such that for $P \ll \delta(x)$, we have
\begin{align}
|U(f,P)-L(f,P)|  &\leq |U(f,P_1)-L(f,P_2)| \label{upperlower} \\
&=  \left|U(f,P_1)- \upint_If +\lowint_If - L(f,P_2)\right| \notag \\
&\leq \left|U(f,P_1)- \upint_If| +|\lowint_If - L(f,P_2)\right|  \notag \\
&< \tfrac{\epsilon}{2} + \tfrac{\epsilon}{2} = \epsilon. \notag
\end{align}

\noindent For the same $P \ll \delta(x)$, we have 
\begin{equation}\label{LSU}
L(f,P)\leq S(f,P) \leq U(f,P), 
\end{equation}

\noindent where $S(f,P)$ is the Riemann sum over $P$. 

\noindent By hypothesis, we have
\[
L(f,P)\leq \sup L(f,P) = \lowint_If = \int_If = \upint_If = \inf U(f,P) \leq U(f,P),
\]
which implies
\begin{equation}\label{UFL}
L(f,P) \leq \int_If \leq U(f,P) \Longrightarrow -U(f,P) \leq -\int_If \leq -L(f,P). 
\end{equation}

\noindent Combining equations (\ref{LSU}) and (\ref{UFL}), and taking $U(f,P)\geq L(f,P)$ by definition,
we have

\begin{equation}\label{SFUL}
\left|S(f,P) -\int_If\right| \leq |U(f,P)-L(f,P)|.
\end{equation}
Now, let $\epsilon>0$. There exists a $\delta(x)>0$ such that for all $\delta(x)$-fine tagged partitions $P$ of $I$, 
we have $|S(f,P) -\int_If|<\epsilon$ by equations (\ref{upperlower}) and (\ref{SFUL}). Thus, we have shown that the
function $f$ is integrable according to Definition \ref{D:e/d}, i.e. $\epsilon-\delta$ Henstock integrable. Theorem
\ref{T:e/d.equiv} asserts that the Sequential Henstock integral is equivalent to the  $\epsilon-\delta$ Henstock 
integral, so we conclude that Definition \ref{D:Darboux} implies Definition \ref{D:SeqHen}; hence they are shown 
to be equivalent.

\end{proof}
\section*{The Topological Henstock Integral}

We now show the equivalence of the Sequential Henstock integral to the Topological Henstock integral defined on a locally compact Hausdorff space. Definitions \ref{D:topg}-\ref{D:TopHen} are from Ahmed and Pfeffer \cite{AP86}.

\begin{notation}\label{setA}
Let $X$ be a locally compact Hausdorff space with a subspace $A \subset X$. We denote the closure
of $A$ as $\bar{A}$ and the interior  as Int $A$. Let $\mathcal{A}$ be a family of subsets of $X$ such that \\
(i) If $A \in \mathcal{A}$, then $\bar{A}$ is compact. \\
(ii) For each $x \in X$, the collection $\mathcal{A}(x)=\{A \in \mathcal{A} \ \vert \ x \in \text{Int }A \}$ is a neighborhood
base at $x$. \\
(iii) If $A, B \in  \mathcal{A}$, then $A \cap B \in \mathcal{A}$, and there exist disjoint sets $C_1, \dots , C_k$ 
in $\mathcal{A}$ such that $A-B=\cup_{i=1}^kC_i$.
\end{notation}

\begin{definition}\label{D:topg}

A \emph{gauge} (topological) on $A \in \mathcal{A}$ is a map $U$ assigning to each $x \in \bar{A}$
a neighborhood $U(x)$ of $x$ contained in $X$ \emph{\cite{AP86}}.

\end{definition}

\begin{definition}\label{D:division}

A \emph{division} (topological) of $A \in \mathcal{A}$ is a disjoint collection $\{A_1, \dots ,A_k \} \subset 
\mathcal{A}$ such that $\cup_{i=1}^kA_i=A$ \emph{\cite{AP86}}.

\end{definition}

\begin{definition}\label{D:toppart}

A \emph{partition} (topological) of $A \in \mathcal{A}$ is a set $P=\{(A_1,t_1), \dots ,(A_k,t_k)\}$ such that
$\{A_1, \dots ,A_k \}$ is a division of $A$ and $\{x_1, \dots ,x_k \} \subset \bar{A}$. If $U$ is a gauge on $A$, we 
say the partition $P$ is $U$-fine if $A_i \subset U(x_i)$, for $i=1,2, \dots ,k$ \emph{\cite{AP86}}.

\end{definition}

\begin{definition}\label{D:vol}

 A \emph{volume} (topological)   is a non-negative function such that $V(A)=\sum_{i=1}^kv(A_i)$ 
 for every $A \in \mathcal{A}$ and each division $\{A_1, \dots ,A_k \}$ of $A$ \emph{\cite{AP86}}.
 \end{definition}
 
 \noindent Note: Volume here can 
 intuitively be defined to represent the ``length" of the``interval" $A$.

\begin{definition}\label{D:TopHen}

\textbf{\emph{Topological Henstock Integral.}} Let $X$ be a locally compact Hausdorff space, and let $A \in \mathcal{A}$
with $f:\bar{A} \to \mathbb{R}$. Then $f$ is Topological Henstock integrable if for each $\epsilon>0$, there exists
a $U(x)$ such that 
\[
\left| \sum_{i=1}^k f(t_i)v(A_i)-\int_Af \right| = \left| \sigma(f,P)-\int_Af \right| <\epsilon
\]
for every $U(x)$-fine partition $P$ of $A$ \emph{\cite{AP86}}.

\end{definition}
%\bigskip

\begin{theorem}\label{T:top.equiv}

The Topological Henstock integral is equivalent to the Sequential Henstock integral on $I=[a,b] \subset \mathbb{R}$.

\end{theorem}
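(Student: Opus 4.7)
The plan is to specialize the topological framework to $I = [a,b] \subset \mathbb{R}$ and show that, under this specialization, Definition \ref{D:TopHen} reduces to Definition \ref{D:e/d}; equivalence with the Sequential Henstock integral then follows by Theorem \ref{T:e/d.equiv}. First I would identify the topological data in $\mathbb{R}$: take $X = \mathbb{R}$ (which is locally compact and Hausdorff) and take $\mathcal{A}$ to be the family of compact subintervals of $I$ (or finite unions thereof, with the shared endpoints handled by the usual convention that single points contribute zero to any Riemann sum), so that properties (i)--(iii) of Notation \ref{setA} are immediate. The natural volume is $v(A) = l(A) = x_i - x_{i-1}$ when $A = [x_{i-1}, x_i]$, which matches Notation \ref{N:RieSum1}. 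With this choice, a topological partition $P = \{(A_1, t_1), \dots, (A_k, t_k)\}$ of $I$ is exactly a tagged partition in the sense of Definition \ref{D:taggedpartition}, and the sum $\sigma(f, P) = \sum_i f(t_i) v(A_i)$ coincides with the Riemann sum $S(f, P)$ of Definition \ref{D:RiemannSum}.

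Next I would set up the translation between topological gauges and real-valued gauges. Given a topological gauge $U$ on $I$, for each $x \in I$ the set $U(x)$ is a neighborhood of $x$, so there exists $\delta(x) > 0$ with $(x - \delta(x), x + \delta(x)) \subset U(x)$; this defines a gauge in the sense of Definition \ref{D:gauge}. Conversely, given $\delta(x) > 0$, define $U(x) = (x - \delta(x), x + \delta(x))$. The key observation is that any $U$-fine topological partition, meaning $A_i \subset U(t_i)$, satisfies $[x_{i-1}, x_i] \subset (t_i - \delta(t_i), t_i + \delta(t_i))$, hence $x_i - x_{i-1} < 2\delta(t_i)$, which means the partition is $2\delta(x)$-fine in the sense of Definition \ref{D:d(x)fine}; similarly, a $\delta(x)$-fine tagged partition yields a $U$-fine topological partition when $U(x)$ is taken as above. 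Replacing $\delta$ by $\delta/2$ where needed absorbs the factor of $2$.

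With this dictionary in hand the two implications are short. For the forward direction, suppose $f$ is Topological Henstock integrable with value $A$. Given $\epsilon > 0$, choose the topological gauge $U$ supplied by Definition \ref{D:TopHen}, set $\delta(x)$ so that $(x - \delta(x), x + \delta(x)) \subset U(x)$, and observe that every $\delta(x)$-fine tagged partition $P$ of $I$ is $U$-fine, so $|S(f,P) - A| = |\sigma(f,P) - A| < \epsilon$; thus $f$ is $\epsilon$-$\delta$ Henstock integrable, and Theorem \ref{T:e/d.equiv} gives Sequential Henstock integrability. For the converse, start from $\epsilon$-$\delta$ Henstock integrability (available via Theorem \ref{T:e/d.equiv}), choose $\delta(x) > 0$ for the given $\epsilon$, set $U(x) = (x - \tfrac{1}{2}\delta(x), x + \tfrac{1}{2}\delta(x))$, and verify that any $U$-fine topological partition is $\delta(x)$-fine, so the Riemann sum is within $\epsilon$ of $\int_I f$.

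The main obstacle I expect is the bookkeeping of the correspondence between $U(x)$ and $\delta(x)$, in particular making sure the ``disjoint'' requirement of a topological division (Definition \ref{D:division}) is reconciled with the standard tagged partition of $[a,b]$, whose consecutive subintervals share an endpoint. I would handle this by noting that either one may pass to half-open intervals $[x_{i-1}, x_i)$ (with the final piece closed) without changing any Riemann sum or length, or one may observe that single points lie in $\mathcal{A}$ with $v = 0$ and split endpoints into degenerate pieces. With this convention fixed, the symmetric-interval representative $(x - \delta(x), x + \delta(x)) \subset U(x)$ gives a canonical way to translate gauges in both directions, and the factor-of-two adjustment noted above is the only genuine analytic content beyond the definitions themselves.
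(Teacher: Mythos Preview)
Your proposal is correct and follows essentially the same route as the paper: specialize the topological data to $X=\mathbb{R}$ with $\mathcal{A}$ the compact intervals and $v=l$, translate between neighborhood gauges $U(x)$ and numerical gauges $\delta(x)$ via $U(x)=(x-\delta(x),x+\delta(x))$, and then invoke Theorem~\ref{T:e/d.equiv}. If anything, you are more careful than the paper about the factor-of-two in the gauge translation and about reconciling the disjointness requirement in Definition~\ref{D:division} with shared endpoints of consecutive subintervals, both of which the paper leaves implicit.
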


By the Heine-Borel Theorem, each closed interval $[a,b] \subset \mathbb{R}$ is compact \cite{Rud76}. Hence, any point $x \in \mathbb{R}$ is contained in an open interval $(a,b)$, which in turn is contained in the compact subspace $[a,b]$, so that $\mathbb{R}$ is a locally compact Hausdorff space \cite{Mun00}. Therefore, we prove the result
for $X=\mathbb{R}$ and $\mathcal{A}=\{[a,b]\ \vert \ a,b \in \mathbb{R}, \ a<b \}$. However, we show that under these
conditions, Definition \ref{D:TopHen}, the Topological Henstock integral, reduces to Definition \ref{D:e/d}, the $\epsilon-\delta$ Henstock integral, which is equivalent to the Sequential Henstock integral in Theorem \ref{T:e/d.equiv}. 

\begin{proof}

For $A \in \mathcal{A}$, let $v(A_i)=x_i-x_{i-1}$ and define a gauge $U_\delta(t_i)$ on $A$ such that 
$U_\delta(t_i)=(t_i-\delta(t_i), t_i+\delta(t_i)), \ i=1,2, \dots, k$, is a $U$-fine partition $P$ on $A$ with $\delta(x)>0$ for 
each $x \in [a,b]$. Thus, in this case, we can say that $P$ is $\delta(x)$-fine since each subinterval $[x_i, x_{i-1}]$ will
be contained in each open interval $(t_i-\delta(t_i), t_i+\delta(t_i)), \ i=1,2, \dots, k$. 
Therefore, for closed intervals $[a,b]$ in $\mathbb{R}$, Definition \ref{D:TopHen} becomes the following:

Let $\epsilon>0$. There exists a $\delta(x)>0$ such that for all $\delta(x)$-fine tagged partitions $P$, we have 
\[
\left| \sum_{i=1}^k f(t_i)(x_i-x_{i-1}) - \int_a^bf \right| = \left| S(f,P)- \int_a^bf \right|<\epsilon,
\]
which is precisely the $\epsilon-\delta$ Henstock integral definition. Thus, by Theorem \ref{T:e/d.equiv}, we conclude 
that Definition \ref{D:TopHen}, the Topological Henstock integral, is equivalent to Definition \ref{D:SeqHen}, the Sequential
Henstock integral, when the sets in $\mathcal{A}$ are $A=[a,b] \subset \mathbb{R}$.

\end{proof}

}

%\chapter{Basic Properties \& Fundamental Theorems of the Sequential Henstock Integral}
\section*{Basic Properties of the Sequential Henstock Integral}
{\large
We now establish the basic properties and fundamental theorems of the Sequential Henstock integral. The statements in the following theorems and lemma have been adapted according to the Sequential Henstock integral. The statements of the theorems and lemma for the standard $\epsilon-\delta$ definition of the Henstock integral can be found in  \cite{Bar01},  \cite{Swa01}, and \cite{Wel11}.

\begin{theorem}\label{T:unique}

If a function $f:I \to \mathbb{R}$ is Sequential Henstock integrable on $I=[a,b] \subset \mathbb{R}$, then the value of the 
integral on $I$ is unique \emph{\cite{Swa01}}.

\end{theorem}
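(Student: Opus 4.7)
The plan is to argue by contradiction using a common refinement of gauges together with Cousin's Lemma. Suppose, for contradiction, that $f$ admits two Sequential Henstock integral values $A$ and $A'$ with $A \ne A'$, and set $\epsilon_0 := |A - A'| > 0$. By Definition \ref{D:SeqHen}, there exist two sequences of positive gauges, say $\{\delta_n(x)\}_{n=1}^\infty$ and $\{\delta_n'(x)\}_{n=1}^\infty$, such that whenever $P_n \ll \delta_n(x)$ the sums $S(f,P_n)$ converge to $A$, and whenever $Q_n \ll \delta_n'(x)$ the sums $S(f,Q_n)$ converge to $A'$.

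The key step is to build a single sequence of tagged partitions whose Riemann sums are forced to converge to both values. Define $\eta_n(x) := \min\{\delta_n(x), \delta_n'(x)\}$ for each $x \in I$ and each $n \in \mathbb{N}$. Since $\delta_n(x) > 0$ and $\delta_n'(x) > 0$ for every $x \in I$, the function $\eta_n$ is itself a gauge on $I$. By Cousin's Lemma (Lemma \ref{L:Cousin}), for every $n \in \mathbb{N}$ there exists at least one $\eta_n(x)$-fine tagged partition $R_n$ of $I$.

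The next observation is that any $\eta_n(x)$-fine tagged partition is automatically both $\delta_n(x)$-fine and $\delta_n'(x)$-fine: if $R_n = \{([x_{i-1},x_i], t_i)\}_{i=1}^{k_n}$ satisfies $x_i - x_{i-1} < \eta_n(t_i)$, then $x_i - x_{i-1} < \delta_n(t_i)$ and $x_i - x_{i-1} < \delta_n'(t_i)$ as well. Applying Definition \ref{D:SeqHen} to each gauge sequence with this single partition sequence $\{R_n\}$, we conclude both $S(f,R_n) \to A$ and $S(f,R_n) \to A' $ as $n \to \infty$.

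By uniqueness of limits of real sequences, $A = A'$, contradicting the assumption $\epsilon_0 > 0$. I do not expect any serious obstacle here; the only point requiring care is verifying that $\eta_n$ is a legitimate gauge (so that Cousin's Lemma applies to produce the partitions $R_n$), and that $\eta_n(x)$-fineness is preserved under enlarging the gauge pointwise, both of which follow directly from the definitions of gauge and $\delta(x)$-fineness. The proof therefore reduces to combining Cousin's Lemma with the uniqueness of limits in $\mathbb{R}$.
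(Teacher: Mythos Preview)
Your proposal is correct and follows essentially the same approach as the paper: take the pointwise minimum of the two gauge sequences, observe that any partition fine with respect to the minimum is fine with respect to both, and conclude via the triangle inequality (the paper) or uniqueness of real limits (you). Your explicit appeal to Cousin's Lemma to guarantee the existence of the $\eta_n$-fine partitions $R_n$ is a detail the paper leaves implicit, but otherwise the arguments coincide.
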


\begin{proof}

Let $\int_If=A$ and $\int_If=B$. We show $A=B$. Let $\epsilon>0$. According to Definition \ref{D:SeqHen}, 
there exists a $\delta_M(x) \in \{\delta_m(x)\}_{m=1}^\infty$ such that if $m \geq M\in \mathbb{N}$ and $P_m\ll 
\delta_m(x)$,
 then  
 \[
|S(f,P_m)-A|<\tfrac{\epsilon}{2}.
 \]
 
\noindent Further, there exists a $\delta_K(x) \in \{\delta_k(x)\}_{k=1}^\infty$ such that if $k \geq K\in \mathbb{N}$ and $P_k\ll \delta_k(x)$,
 then  
 \[
|S(f,P_k)-A|<\tfrac{\epsilon}{2}.
 \]

Now, for each $\delta_m(x)\in \{\delta_m(x)\}_{m=1}^\infty$ and each $\delta_k(x)\in \{\delta_k(x)\}_{k=1}^\infty$, choose \\
$\delta_n(x)=$min$\{\delta_m(x),\delta_k(x)\}$ for $n, m, k =1,2,3, \dots$ and for all $x\in I$. Clearly, $\delta_n(x)$ is a sequence of gauges on $I$. Let $P_n\ll \delta_n(x)$. Then $P_n\ll \delta_m(x)$ and $P_n\ll \delta_k(x)$. 

\noindent For $\epsilon>0$, there exists a $\delta_N(x) \in \{\delta_n(x)\}_{n=1}^\infty$ such that if $n \geq N$ and 
$P_n\ll \delta_n(x)$, then  
\begin{align*}
|A-B| &=|A-S(f,P_n)+S(f,P_n)-B| \\
&\leq |A-S(f,P_n)|+|S(f,P_n)-B| \\
&<\tfrac{\epsilon}{2}+\tfrac{\epsilon}{2}=\epsilon.
\end{align*}

% \noindent Since $\epsilon$ is arbitrary, $A$ is equal to $B$, and the assertion is proved.

\end{proof}

\begin{theorem}\label{T:algprop}

Let $f,g:I\to \mathbb{R}$ be Sequential Henstock integrable functions on $I=[a,b] \subset \mathbb{R}$, and let $c \in \mathbb{R}$ \emph{\cite{Swa01}}.

\indent 1. If $f\geq0$ on $I$, then $\int_If \geq0$.

\indent 2. $cf$ is Sequential Henstock integrable on $I$, and $\int_I{cf}=c\int_If.$

\indent 3. $f+g$ is Sequential Henstock integrable on $I$, and $\int_I{(f+g)}=\int_If+\int_Ig.$

\indent 4. If $f(x)\leq g(x)$ for all $x \in I$, then $\int_If\leq \int_Ig.$

\indent 5. If $|f|$ is Sequential Henstock integrable on $I$, then $\left| \int_If \right|\leq \int_I|f|.$

\indent 6. Let $M \in \mathbb{R}$. If $|f(x)|\leq M$ for all $x \in I=[a,b]$, then  $\left| \int_a^bf \right|\leq M(b-a).$

\end{theorem}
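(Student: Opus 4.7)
The plan is to mirror the proof of Theorem \ref{T:unique}: each property is derived by translating the statement to the level of Riemann sums, verifying it sum-by-sum, and then passing to the limit $n \to \infty$ along an appropriate gauge sequence from Definition \ref{D:SeqHen}. The one recurring technical device, needed whenever two functions must be controlled simultaneously, is to take the pointwise minimum of two gauge sequences so that a single $\delta_n(x)$-fine tagged partition is simultaneously fine for both; Cousin's Lemma then guarantees that such partitions actually exist.

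First I would dispatch the parts that only require a single gauge sequence. For (1), every term $f(t_i)(x_i - x_{i-1})$ of $S(f, P_n)$ is nonnegative, so $S(f, P_n) \geq 0$ for every $n$, and the nonnegativity of the limit gives $\int_I f \geq 0$. For (2), using the gauge sequence $\{\delta_n(x)\}$ witnessing $\int_I f = A$, the identity $S(cf, P_n) = c\,S(f, P_n)$ forces $S(cf, P_n) \to cA$, establishing both integrability of $cf$ and the formula. For (6), since $|f(t_i)| \leq M$ and the subinterval lengths of any partition of $[a,b]$ sum to $b-a$, the triangle inequality gives $|S(f, P_n)| \leq M(b-a)$ for every $n$; passing to the limit yields the bound with no separate integrability assumption on $|f|$.

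For (3), I would take gauge sequences $\{\delta_n^f(x)\}$ and $\{\delta_n^g(x)\}$ witnessing $\int_I f$ and $\int_I g$ respectively and set $\delta_n(x) = \min\{\delta_n^f(x), \delta_n^g(x)\}$; any $\delta_n(x)$-fine partition $P_n$ is simultaneously $\delta_n^f(x)$- and $\delta_n^g(x)$-fine, and since $S(f+g, P_n) = S(f, P_n) + S(g, P_n)$, letting $n \to \infty$ yields $\int_I (f+g) = \int_I f + \int_I g$. Part (4) then follows by applying (2) with $c = -1$, (3), and (1) to the nonnegative difference $g - f$, giving $\int_I g - \int_I f = \int_I (g - f) \geq 0$. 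Part (5) follows by noting that $-|f|$ is integrable by (2), applying (4) to the pointwise bounds $-|f| \leq f \leq |f|$, and combining the resulting inequalities into $|\int_I f| \leq \int_I |f|$.

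The main obstacle is really only part (3): once the gauge-minimum trick is justified, the remaining identities and inequalities are formal consequences of (1)--(3). The delicate point is confirming that $\min\{\delta_n^f(x), \delta_n^g(x)\}$ is still a positive function on $I$ (so it qualifies as a gauge) and that the corresponding sequence of $\delta_n(x)$-fine tagged partitions exists at every level $n$, which is exactly the content of Cousin's Lemma.
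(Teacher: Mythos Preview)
Your proposal is correct and follows essentially the same route as the paper: the min-gauge device for part (3) and the deduction of (4)--(5) from (1)--(3) are identical, and the treatment of (1) and (2) differs only cosmetically (you phrase things as passing to a limit, the paper writes out the $\epsilon$--$N$ inequality). The one small divergence is in (6), where you bound the Riemann sums directly by $M(b-a)$ and pass to the limit, while the paper instead applies part (4) to the pointwise inequalities $-M\le f\le M$; both arguments are immediate.
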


\begin{proof}
(1.) Let $f\geq0$ on $I$ and $f\in H^*(I)$ (the class of Sequential Henstock integrable functions). Then for every
$\epsilon>0$, there exists a $\delta_N(x) \in \{\delta_n(x)\}_{n=1}^\infty$ such that if $n \geq N$ and $P_n\ll \delta_n(x)$,
 then $|S(f,P_n)-\int_If|\leq \epsilon.$ Equivalently, we have
 
\begin{equation}\label{part5a}
 \left| \sum_{i=1}^kf(t_i)l_n(I_i)-\int_If \right|\leq \epsilon,
 \end{equation}
 
 \noindent where $l_n(I_i)$ is the length of the $i^{th}$ subinterval in the $n^{th}$ $\delta_n(x)$-fine partition $P_n$. 
 From (\ref{part5a}), we have
\begin{equation}\label{part5a2}
 -\epsilon \leq \sum_{i=1}^kf(t_i)l_n(I_i)-\int_If\leq \epsilon.
 \end{equation}
 $f(t_i)\geq 0$ for every $t_i\in P_n$ and $l_n(I_i)>0$ by definition, so  $0\leq \sum f(t_i)l_n(I_i)=S(f,P_n)$. 
 Combining this with (\ref{part5a2}), we have
 \[
0\leq S(f,P_n)\leq \int_If+\epsilon \text{\qquad for all } n\geq N.
\]
Since $\epsilon$ is arbitrary, we have  $\int_If \geq0$.

(2.) Let $f\in H^*(I)$, $k\in \mathbb{R},$ and
$\epsilon>0$. There exists a $\delta_N(x) \in \{\delta_n(x)\}_{n=1}^\infty$ such that $n \geq N$ and $P_n\ll \delta_n(x)$
 implies $\left| S(f,P_n)-\int_If \right| \leq \tfrac{\epsilon}{|c|}.$ Then, for $n\geq N$ we have
 \begin{align*}
 \left| S(cf,P_n)-c\int_I{f} \right| &=\left| \sum_{i=1}^kcf(t_i)l_n(I_i)-c\int_If \right| \\
  &= \left| c \left( \sum_{i=1}^kf(t_i)l_n(I_i)-\int_If \right) \right| \\
  &= |c| \cdot \left| \sum_{i=1}^kf(t_i)l_n(I_i)  - \int_If \right| \\
   &=|c| \cdot \left| S(f,P_n)-\int_If \right| \\
 &<|c| \cdot \tfrac{\epsilon}{|c|} = \epsilon.
 \end{align*}
 
 \noindent Thus, $cf\in H^*(I)$, and $\int_I{cf}=c\int_If.$
 
 (3.) Let $\epsilon>0$. Since $f\in H^*(I)$ there exists a $\delta_M(x) \in \{\delta_m(x)\}_{m=1}^\infty$ such that 
 \\ $m \geq M\in \mathbb{N}$ and $P_m\ll \delta_m(x)$
 implies $\left| S(f,P_m-\int_If \right| \leq \tfrac{\epsilon}{2}.$
 Similarly, since $g\in H^*(I)$,  \\ there exists a $\delta_K(x) 
 \in \{\delta_k(x)\}_{k=1}^\infty$ such that 
 $k \geq K\in \mathbb{N}$ and $P_k\ll \delta_k(x)$
 implies \\ $\left| S(g,P_k)-\int_If \right| \leq \tfrac{\epsilon}{2}.$
 
 Now for each $\delta_m(x)\in \{\delta_m(x)\}_{m=1}^\infty$ and each $\delta_k(x)\in \{\delta_k(x)\}_{k=1}^\infty$, choose \\
$\delta_n(x)=$min$\{\delta_m(x),\delta_k(x)\}$ for $n, m, k =1,2,3, \dots$ and for all $x\in I$. Clearly, $\delta_n(x)$ is a sequence of gauges on $I$. Let $P_n\ll \delta_n(x)$. Then $P_n\ll \delta_m(x)$ and $P_n\ll \delta_k(x)$. 

For every $\epsilon>0$, there exists a $\delta_N(x) \in \{\delta_n(x)\}_{n=1}^\infty$ such that 
  $n \geq N$ and $P_n\ll \delta_n(x)$ 
 implies 
 \begin{align*}
 \left| S(f+g,P_n)- \left( \int_If+\int_Ig \right) \right| &=  \left| \sum_{i=1}^p(f+g)(t_i)l_n(I_i)- \left( \int_If+\int_Ig \right) \right| \\
 &= \left| \sum_{i=1}^pf(t_i)l_n(I_i)- \int_If + \sum_{i=1}^pg(t_i)l_n(I_i) - \int_Ig  \right| \\
&\leq \left| \sum_{i=1}^pf(t_i)l_n(I_i)- \int_If \right| + \left| \sum_{i=1}^pg(t_i)l_n(I_i) - \int_Ig  \right| \\
&< \tfrac{\epsilon}{2} + \tfrac{\epsilon}{2}=\epsilon.
 \end{align*}
 Thus $f+g\in H^*(I)$, and $\int_I{f+g}=\int_If+\int_Ig.$
 
 (4.) Suppose $f,g \in H^*(I)$ and $f(x)\leq g(x)$ for all $x \in I$. Let $h=g-f\geq 0.$ By Part (3) above, $h=g-f \in H^*(I)$ and $\int_I(g-f)=\int_Ig
 +\int_I(-f)=\int_Ig -\int_If$, by Part (2). Further, by Part (1), $\int_Ih\geq0.$ Thus
 \[
 0\leq \int_Ih = \int_I(g-f) = \int_Ig -\int_If \quad \Longrightarrow \quad \int_If \leq \int_Ig.
 \]
 
 (5.) Let $f, |f| \in H^*(I)$. Since $-|f| \leq f \leq |f|,$ then by Part (4) above, we have
 \[
 -\int_I|f|\leq \int_If\leq \int_I|f|.
 \]
 Therefore, $\left| \int_If \right|\leq \int_I|f|.$
 
 (6.) Let $f \in H^*(I)$ and $M \in \mathbb{R}$ such that $|f(x)|\leq M$ for all $x \in I=[a,b]$. Then $-M\leq f \leq M,$ 
 so by Part (4) above, we have
 \[
  -\int_a^bM \leq \int_a^bf\leq \int_a^bM \quad \Longrightarrow \quad \left| \int_a^bf \right| \leq \int_a^bM=M(b-a)
 \]

\end{proof}

\begin{center}
\textbf{Additivity \& Cauchy Criterion for Sequential Henstock Integrability}
\end{center}

We now define the right-left procedure \cite{Bar01} for picking tag points in a tagged partition, which allows us to have some or all of the tags be endpoints of subintervals.

\begin{definition}\label{D:rightleft}

Let $P=\{([x_{k-1}, x_k], t_k)\}_{k=1}^m$ be a tagged partition of an interval $I=[a,b]$, and let $t_k$ be an interior point of the 
subinterval $[x_{k-1}, x_k].$ Using the \emph{right-left procedure}, we then let $P^*$ be formed from $P$ by adding a new partition point $\eta=t_k$ so that
\[
a=x_0<x_1< \dots <x_{k-1}< \eta < x_k < \dots <x_{m-1}<x_m=b.
\]
Now tag both subintervals $[x_{k-1}, \eta]$ and $[\eta, x_k]$ with the tag $\eta=t_k$ so that $\eta$ is the right endpoint of the first of these subintervals and the left endpoint of the second. 

\noindent Since
\[
f(t_k)(x_k-x_{k-1})=f(t_k)(\eta-x_{k-1})+f(t_k)(x_k-\eta),
\]
we have $S(f,P)=S(f,P^*)$ \emph{\cite{Bar01}}.

\end{definition}
 
 We note that this process can be reversed  in order to join two abutting subintervals that share the tag $t_k$ so that the new subinterval has $t_k$ as a tag on the interior. Therefore, the right-left procedure allows us to assume that 

 (i) all of the tags are endpoints of their respective subintervals, \\
\indent (ii) no tag, except possibly $a$ or $b$, is an endpoint of the subintervals, or \\
\indent (iii) no point is the tag of two distinct subintervals.

\begin{theorem}\label{T:additivity}

\emph{\textbf{Additivity.}} Let $f:[a,b]\to \mathbb{R}$ and $c\in (a,b)$. If $f$ is a Sequential Henstock integrable function on $[a,c]$ and $[c,b]$, then $f$ is Sequential Henstock integrable on $[a,b]$ and 
\[
\int_a^bf=\int_a^cf+\int_c^bf \qquad \emph{\cite{Swa01}}.
\]

\end{theorem}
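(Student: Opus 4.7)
The plan is to construct a sequence of gauges on $[a,b]$ directly from the two given sequences on $[a,c]$ and $[c,b]$, in such a way that the point $c$ is effectively forced to appear as a partition point. Let $\{\delta_n^1(x)\}$ witness that $f$ is Sequential Henstock integrable on $[a,c]$ with value $A_1=\int_a^c f$, and let $\{\delta_n^2(x)\}$ do the same on $[c,b]$ with value $A_2=\int_c^b f$. I would define a combined sequence of gauges $\{\delta_n(x)\}$ on $[a,b]$ by
\[
\delta_n(x)=\begin{cases} \min\{\delta_n^1(x),\, c-x\} & \text{if } x\in[a,c),\\ \min\{\delta_n^1(c),\, \delta_n^2(c)\} & \text{if } x=c,\\ \min\{\delta_n^2(x),\, x-c\} & \text{if } x\in(c,b].\end{cases}
\]
The key property of this construction is that for any $\delta_n(x)$-fine tagged partition $P_n$ of $[a,b]$, no subinterval can straddle the point $c$ unless its tag equals $c$. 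Indeed, if the tag $t_i<c$ and $[x_{i-1},x_i]$ is the corresponding subinterval with $x_i-x_{i-1}<\delta_n(t_i)\leq c-t_i$, then $x_i<x_{i-1}+(c-t_i)\leq c$; the symmetric argument handles $t_i>c$.

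Next I would apply the right-left procedure from Definition \ref{D:rightleft} to any subinterval of $P_n$ whose interior contains $c$ (which must have $t_i=c$ by the above). Splitting that subinterval at $c$ and assigning $c$ as the common tag gives a refined partition $P_n^*$ with $S(f,P_n^*)=S(f,P_n)$ and with $c$ as a partition point. Consequently $P_n^*$ decomposes as $P_n^{(1)}\cup P_n^{(2)}$, where $P_n^{(1)}$ is a tagged partition of $[a,c]$ and $P_n^{(2)}$ is a tagged partition of $[c,b]$. By construction, $\delta_n(x)\leq \delta_n^1(x)$ for $x\in[a,c]$ and $\delta_n(x)\leq \delta_n^2(x)$ for $x\in[c,b]$, so $P_n^{(1)}\ll\delta_n^1(x)$ and $P_n^{(2)}\ll\delta_n^2(x)$.

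Summing contributions gives $S(f,P_n)=S(f,P_n^*)=S(f,P_n^{(1)})+S(f,P_n^{(2)})$. By the Sequential Henstock integrability hypotheses, $S(f,P_n^{(1)})\to A_1$ and $S(f,P_n^{(2)})\to A_2$ as $n\to\infty$, so $S(f,P_n)\to A_1+A_2$. Since this holds for every $\delta_n(x)$-fine tagged partition sequence, Definition \ref{D:SeqHen} yields $f\in H^*([a,b])$ with $\int_a^b f=\int_a^c f+\int_c^b f$, and uniqueness of the value is guaranteed by Theorem \ref{T:unique}.

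The main obstacle is the bookkeeping around the point $c$: one must verify that the gauge construction genuinely prevents subintervals tagged at points other than $c$ from crossing $c$, and that the right-left procedure can be legitimately applied to the (at most one) offending subinterval tagged at $c$ without altering the Riemann sum. Once this geometric/combinatorial step is carefully established, the convergence argument is routine. An alternative route would be to invoke the Cauchy Criterion for Sequential Henstock Integrability on $[a,b]$, but the direct construction above is more transparent and immediately yields the additive identity for the integral.
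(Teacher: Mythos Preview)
Your proposal is correct and follows essentially the same approach as the paper: both construct a combined gauge on $[a,b]$ that forces $c$ to be the tag of any subinterval straddling it (the paper uses $\tfrac{1}{2}(c-t)$ where you use $c-t$, an immaterial difference), then apply the right-left procedure to make $c$ a partition point, split $P_n$ into pieces on $[a,c]$ and $[c,b]$, and conclude via the additivity of Riemann sums. Your justification of why no subinterval tagged away from $c$ can cross $c$ is in fact spelled out more carefully than in the paper.
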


\begin{proof}

Let $\epsilon>0$. Since $f\in H^*([a,c])$ there exists a $\delta_M(x) \in \{\delta_m(x)\}_{m=1}^\infty$ such that 
 \\ $m \geq M\in \mathbb{N}$ and $P_m\ll \delta_m(x)$
 implies $\left| S(f,P_m)-\int_a^cf \right| \leq \tfrac{\epsilon}{2}.$
 Similarly, since $f\in H^*([c,b])$,  \\ there exists a $\delta_K(x) 
 \in \{\delta_k(x)\}_{k=1}^\infty$ such that 
 $k \geq K\in \mathbb{N}$ and $P_k\ll \delta_k(x)$
 implies \\ $\left| S(f,P_k)-\int_c^bf \right| \leq \tfrac{\epsilon}{2}.$
 
 We define a gauge \cite{Bar01} $\delta_N(x) \in \{\delta_n(x)\}_{n=1}^\infty$ in order to force the point $c$ to be a tag of each 
$P_n\ll \delta_n(x)$ for $n\geq N$. Using the right-left procedure in Defintion \ref{D:rightleft}, we then split apart each partition $P_n$ at the 
tag $c$ so that it becomes a partition point of each $P_n$.
\begin{equation}\label{addgauge}
\text{Let } \delta_N(t)= \left \{ 
\begin{array}{lr} 
min \{\delta_m(t),\tfrac{1}{2}(c-t)\} &t\in[a,c) \\
min \{\delta_m(c), \delta_k(c)\} &t=c \\
min\{\delta_k(t),\tfrac{1}{2}(t-c)\} &t\in(c,b]
\end{array} 
\right.
\qquad  \text{for } m\geq M \text{ and } k\geq K.
\end{equation}

Now, let $P_n\ll \delta_n(x)$ for $n\geq N$. Let $P_m$ be the sequence of partitions of $[a,c]$ consisting of the partition points $P_n \cap [a,c]$ and  $P_k$  the sequence of partitions of $[c,b]$ consisting of the partition points $P_n \cap [c,b]$ for $n,m,k=1,2,3,\dots.$ 

\noindent Then the right-left procedure provides that 
\[
S(f,P_n)=S(f,P_m)+S(f,P_k).
\]
Given $\epsilon>0$, there exists a $\delta_N(x) \in \{\delta_n(x)\}_{n=1}^\infty$ such that for $n\geq N$ we have

\begin{align*} 
\left| S(f,P_n) -\left( \int_a^cf + \int_c^bf \right) \right| &= \left| \sum_{i=1}^jf(t_i)l_n(I_i) -\left( \int_a^cf + \int_c^bf \right) \right| \\
&=\left| \sum_{i=1}^pf(t_i)l_m(I_i)+ \sum_{i=p}^jf(t_i)l_k(I_i) -\left( \int_a^cf + \int_c^bf \right) \right| \\
& =\left| S(f,P_m) - \int_a^cf + S(f,P_k) -\int_c^bf \right| \\
&\leq  \left| S(f,P_m) - \int_a^cf \right| + \left| S(f,P_k) -\int_c^bf \right| \\
& <\tfrac{\epsilon}{2} + \tfrac{\epsilon}{2}=\epsilon.
\end{align*}
Therefore, $f\in H^*([a,b])$ and $\int_a^bf=\int_a^cf+\int_c^bf.$
\end{proof}

We now prove the Cauchy Criterion Sequential Henstock integrability, which will allow us, among other things, to prove the converse of Theorem \ref{T:additivity}; i.e., if $f\in H^*(I)$, then $f$ is Sequential Henstock integrable on any compact subinterval of $I$.

\begin{theorem}\label{T:cauchy}

\emph{\textbf{Cauchy Criterion for Sequential Henstock Integrability.}} Let $f:I \to \mathbb{R}$. Then $f$ is Sequential Henstock integrable on $I$ if and only if for every $\epsilon>0$, there exists a $\delta_N(x) \in \{\delta_n(x)\}_{n=1}^\infty$ such that for $n\geq N$ and for all $\delta_n(x)$-fine tagged partitions $P_n$ and $Q_n$ of $I$ we have
\[
|S(f,P_n)-S(f,Q_n)|<\epsilon \qquad \emph{\cite{Bar01}}.
\]

\end{theorem}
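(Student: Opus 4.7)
The plan is to prove the forward direction by a direct triangle inequality argument and the reverse direction by constructing the integral value $A$ from Cousin's Lemma and the completeness of $\mathbb{R}$. For the forward implication, I assume $f \in H^*(I)$ with integral $A$ and associated sequence of gauges $\{\delta_n(x)\}_{n=1}^\infty$. Given $\epsilon > 0$, Definition \ref{D:SeqHen} supplies an index $N$ such that $|S(f, R_n) - A| < \epsilon/2$ whenever $n \geq N$ and $R_n$ is a $\delta_n(x)$-fine tagged partition. For any two such partitions $P_n$ and $Q_n$, the triangle inequality through $A$ yields $|S(f, P_n) - S(f, Q_n)| < \epsilon$.

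For the reverse direction, I invoke Lemma \ref{L:Cousin} to select, for each $n$, one $\delta_n(x)$-fine tagged partition $P_n$, and set $a_n = S(f, P_n)$. The key step is to show $\{a_n\}$ is a Cauchy sequence in $\mathbb{R}$. Since $\{\delta_n(x)\}_{n=1}^\infty$ is taken to be decreasing, a $\delta_m(x)$-fine partition with $m \geq N$ is automatically $\delta_N(x)$-fine; hence for $m, k \geq N$, both $P_m$ and $P_k$ are $\delta_N(x)$-fine, and the Cauchy hypothesis gives $|a_m - a_k| < \epsilon$. Completeness of $\mathbb{R}$ then yields $a_n \to A$ for some $A \in \mathbb{R}$.

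To verify that this $A$ is the Sequential Henstock integral of $f$, I let $\{Q_n\}$ be any sequence with each $Q_n$ a $\delta_n(x)$-fine tagged partition and bound $|S(f, Q_n) - A|$ by inserting $a_n = S(f, P_n)$ as an intermediate term: one portion comes from convergence of $\{a_n\}$ to $A$, and the other from the Cauchy hypothesis applied to the $\delta_n(x)$-fine pair $P_n, Q_n$. The main obstacle is conceptual rather than technical: the Cauchy condition here is indexed by position in a sequence of gauges rather than by a single gauge, so I must argue that partitions fine with respect to different gauges in the sequence can still be meaningfully compared. The decreasing assumption on $\{\delta_n(x)\}_{n=1}^\infty$, noted in the remark following Theorem \ref{T:e/d.equiv}, resolves this cleanly, and the remaining estimates are routine triangle-inequality computations.
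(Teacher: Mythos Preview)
Your proposal is correct and follows essentially the same approach as the paper's proof: the forward direction is the same triangle-inequality argument through the integral value, and the reverse direction likewise selects one $\delta_n(x)$-fine partition per $n$ (via Cousin's Lemma), uses the decreasing-gauge assumption to show the resulting Riemann sums form a Cauchy sequence, and then verifies that the limit is the integral by splitting $|S(f,Q_n)-A|$ through $S(f,P_n)$. If anything, your write-up is slightly more explicit than the paper's about invoking Cousin's Lemma and about why the decreasing assumption lets you compare $P_m$ and $P_k$ at the common level $N$.
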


\begin{proof}
$\Rightarrow$ Suppose $f\in H^*(I)$. Let $\epsilon>0$. There exists a $\delta_N(x) \in \{\delta_n(x)\}_{n=1}^\infty$ such that for $n\geq N$ and for $P_n \ll \delta_n(x)$, we have $|S(f,P_n)-\int_If|<\tfrac{\epsilon}{2}.$ Now, if $P_n \ll \delta_n(x)$ and $Q_n \ll \delta_n(x)$, then
\begin{align*}
|S(f,P_n)- S(f,Q_n)| &=  \left| S(f,P_n) -\int_If+\int_If -S(f,Q_n)  \right| \\
&\leq \left| S(f,P_n) -\int_If \right|+ \left| \int_If -S(f,Q_n)  \right| \\
&< \tfrac{\epsilon}{2} + \tfrac{\epsilon}{2}=\epsilon
\end{align*}

$\Leftarrow$  Let $\epsilon>0$. Suppose there exists a $\delta_N(x) \in \{\delta_n(x)\}_{n=1}^\infty$ such that for $n\geq N$, \\$P_n \ll \delta_n(x)$, and $Q_n \ll \delta_n(x)$, we have $|S(f,P_n)-S(f,Q_n)|<\tfrac{1}{n}.$ We now construct a Cauchy Sequence of Riemann sums \cite{Wel11}, which converges (by the Cauchy Criterion for $\mathbb{R}$) to a real number we shall denote by $S^*$. Note that without loss of generality, we may assume that $\{\delta_n(x)\}_{n=1}^\infty$ is a decreasing sequence for all $x \in I$. Thus, for any $m>n$, $P_m$ is $\delta_n(x)$-fine, and letting $n \to \infty,$ we see that $|S(f,P_n)-S(f,Q_n)|<\tfrac{1}{n}$ is a Cauchy Sequence. Hence,  
\[
\{S(f,P_n)\}_{n=1}^\infty \longrightarrow  S^* \quad \text{ as } n \to \infty,
\]
and we write that for every $\epsilon>0$, there exists an $n\in \mathbb{N}$ such that for $n\geq N$, we have $|S(f,P_n)-S^*|<\tfrac{1}{n}.$

Now, let $\epsilon>0$. There exists a $\delta_N(x) \in \{\delta_n(x)\}_{n=1}^\infty$ such that for $n\geq N$, where $\tfrac{1}{N}<\tfrac{\epsilon}{2}$, and for $P_n \ll \delta_n(x)$ and $Q_n \ll \delta_n(x)$, we have
\begin{align*}
|S(f,Q_n)-S^*| &= |S(f,Q_n)-S(f,P_n)+S(f,P_n)-S^*| \\
&\leq |S(f,Q_n)-S(f,P_n)|+|S(f,P_n)-S^*| \\
&< \tfrac{1}{N}+\tfrac{1}{N}<\tfrac{\epsilon}{2} + \tfrac{\epsilon}{2}=\epsilon.
\end{align*}
Thus $f\in H^*(I)$ and $\int_If=S^*$.

\end{proof}

Using the Cauchy Criterion for Sequential Henstock integrability, we now prove the converse of Theorem \ref{T:additivity}; i.e., if $f$ is Sequential Henstock integrable on $I$, then $f$ is Sequential Henstock integrable on any compact subinterval of $I$, which we denote as $I_i$.

\begin{theorem}\label{T:decompos}

Let $f:I \to \mathbb{R}$. If $f\in H^*(I)$, then $f\in H^*(I_i)$ for $i=1,2,3,\dots,k$ where $k\in \mathbb{N}$ \emph{\cite{Wel11}}.
\end{theorem}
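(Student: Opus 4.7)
The plan is to apply the Cauchy Criterion for Sequential Henstock integrability (Theorem~\ref{T:cauchy}) on each subinterval, using the sequence of gauges that $f$ already comes with on $I$. First I would reduce to the case of a single compact subinterval $J=[c,d]\subset[a,b]=I$: once the claim $f\in H^*(J)$ is established for an arbitrary compact $J\subset I$, the general statement for a finite family $\{I_i\}_{i=1}^k$ follows by applying the one-subinterval result $k$ times. The key idea is that two arbitrary tagged partitions of $J$ can always be bridged to tagged partitions of the whole interval $I$ by pasting on a common piece on each side, so that subtracting Riemann sums kills off the common pieces and only the $J$-parts remain.

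In detail, fix $\epsilon>0$. Since $f\in H^*(I)$, Theorem~\ref{T:cauchy} supplies a sequence of gauges $\{\delta_n(x)\}_{n=1}^\infty$ on $I$ and an index $N$ such that $|S(f,P_n)-S(f,Q_n)|<\epsilon$ whenever $n\geq N$ and $P_n,Q_n\ll\delta_n(x)$ on $I$. I would then sharpen each $\delta_n$ at points other than $c$ and $d$ by setting
\[
\widetilde{\delta}_n(x)=\min\{\delta_n(x),\tfrac{1}{2}|x-c|,\tfrac{1}{2}|x-d|\}\quad\text{for }x\in I\setminus\{c,d\},
\]
leaving $\widetilde{\delta}_n(c)=\delta_n(c)$ and $\widetilde{\delta}_n(d)=\delta_n(d)$. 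This is the same forcing trick used in the proof of Theorem~\ref{T:additivity}: every $\widetilde{\delta}_n$-fine tagged partition of $I$ must have $c$ and $d$ as partition points. Now take any two $\widetilde{\delta}_n$-fine tagged partitions $P_n',Q_n'$ of $J$, and use Cousin's Lemma (Lemma~\ref{L:Cousin}) to fix, once and for all for each $n\geq N$, a single $\widetilde{\delta}_n$-fine tagged partition $R_n^L$ of $[a,c]$ and $R_n^R$ of $[d,b]$. The concatenations $P_n:=R_n^L\cup P_n'\cup R_n^R$ and $Q_n:=R_n^L\cup Q_n'\cup R_n^R$ are both $\widetilde{\delta}_n$-fine tagged partitions of $I$, so the shared contributions of $R_n^L$ and $R_n^R$ cancel and
\[
|S(f,P_n')-S(f,Q_n')|=|S(f,P_n)-S(f,Q_n)|<\epsilon.
\]
The Cauchy Criterion applied on $J$ then yields $f\in H^*(J)$, and the theorem follows.

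The main obstacle I anticipate is purely technical: verifying that the concatenation really is a valid tagged partition of $I$, that is, that its subintervals tile $I$ without overlap and that each tag lies in its subinterval. The gauge-shrinking step above ensures that $c$ and $d$ appear as common endpoints where the three pieces meet, and the right-left procedure (Definition~\ref{D:rightleft}) can be invoked if $c$ or $d$ winds up as a tag shared across two abutting subintervals. Beyond this endpoint bookkeeping, the proof is essentially a cancellation argument, with Theorem~\ref{T:cauchy} doing the real work.
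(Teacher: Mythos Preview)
Your proposal is correct and follows essentially the same route as the paper: extend two tagged partitions of the subinterval to tagged partitions of all of $I$ by adjoining a \emph{common} complementary piece, so that the outside contributions cancel and the Cauchy estimate on $I$ transfers to the subinterval. The paper carries this out for the special splitting $I=I_1\cup I_2$ and then inducts, whereas you handle an arbitrary $J=[c,d]\subset I$ in one stroke; your gauge-sharpening step is harmless but actually unnecessary here, since the concatenation $R_n^L\cup P_n'\cup R_n^R$ already has $c$ and $d$ as partition points by construction.
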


\begin{proof}
We first prove the case for $k=2$ and then show the general result by induction. Let $\epsilon>0$. There exists a $\delta_N(x) \in \{\delta_n(x)\}_{n=1}^\infty$ such that for $n\geq N$ and for $P_n \ll \delta_n(x)$, we have $|S(f,P_n)-\int_If|<\tfrac{\epsilon}{2}.$ Let $I_1$ and $I_2$ be intervals such that $I_1 \cap I_2 \neq \emptyset$ and $I_1 \cup I_2=I$ \cite{Wel11}. We denote the restrictions of $\delta_n(x)$ to  
 $I_1$ and $I_2$ as $\delta_{1_n}(x)$ and $\delta_{2_n}(x)$, respectively. Then, for $P_{1_n} \ll \delta_{1_n}(x)$ and $P_{2_n} \ll \delta_{2_n}(x)$, we have 
\[
S(f,P_n)=\sum_{i=1}^2f(t_i)l_n(I_i)=f(t_1)l_n(I_1)
+f(t_2)l_n(I_2)=S(f,P_{1_n})+S(f,P_{2_n}).
\]
We observe that  if $Q_{1_n} \ll \delta_{1_n}(x)$ and $P_{2_n} \ll \delta_{2_n}(x)$, then $R_n=Q_{1_n}\cup P_{2_n}$ is $\delta_n(x)$-fine for \\ $n=1,2,3,\dots.$ Note that both of the previous results hold, by induction, for any finite number of subintervals of $I$, which we shall use below. Now, let $\epsilon>0$. There exists a $\delta_N(x) \in \{\delta_n(x)\}_{n=1}^\infty$ such that for $n\geq N$ and $P_{1_n}, Q_{1_n} \ll \delta_{1_n}(x)$ on $I_1$, we have
\begin{align*}
|S(f,P_{1_n})-S(f,Q_{1_n})| &= \left|S(f,P_{1_n})+S(f,P_{2_n})-\int_If-S(f,Q_{1_n})-S(f,P_{2_n})+\int_If \right| \\
&\leq  \left|S(f,P_{1_n})+S(f,P_{2_n})-\int_If\right| + \left| S(f,Q_{1_n})+S(f,P_{2_n})-\int_If \right| \\
&= \left|S(f,P_n)-\int_If\right| +\left| S(f,R_n)+\int_If \right| \\
&<\tfrac{\epsilon}{2} + \tfrac{\epsilon}{2}=\epsilon.
\end{align*}

A similar result follows for $I_2$. This proves the assertion for the $k=2$ case. Now, assume this holds for the $k^{\text{th}}$ case. We show that it holds for the $(k+1)^{\text{th}}$ case. Let $\epsilon>0$. There exists a $\delta_N(x) \in \{\delta_n(x)\}_{n=1}^\infty$ such that for $n\geq N$ and $P_{{(k+1)}_n}, Q_{{(k+1)}_n} \ll \delta_{{(k+1)}_n}(x)$ on $I_{k+1}$, we have
\begin{align*}
|S(f,P_{{(k+1)}_n})-S(f,Q_{{(k+1)}_n})| &\leq \left|S(f,P_{{(k+1)}_n})+\left[ S(f,P_{1_n})+S(f,P_{2_n})+\dots 
+S(f,P_{k_n}) \right] -\int_If \right| \\
&+ \left|S(f,Q_{{(k+1)}_n})+\left[ S(f,P_{1_n})+S(f,P_{2_n})+\dots 
+S(f,P_{k_n}) \right] -\int_If \right| \\
&= \left|S(f,P_n)-\int_If\right| +\left| S(f,R_n)+\int_If \right| \\
&<\tfrac{\epsilon}{2} + \tfrac{\epsilon}{2}=\epsilon.
\end{align*}
Thus, $f\in H^*(I_i)$ for all $i=k\in \mathbb{N}.$

\end{proof}

}
\section*{Fundamental Theorems of the Sequential Henstock Integral}
{\large
We now establish the Fundamental Theorem of Calculus, Parts I and II, using the Sequential Henstock integral as 
well as Integration by Parts and Substitution theorems. We then prove Uniform Convergence and Monotone 
Convergence for the Sequential Henstock integral, from which Dominated Convergence follows.

\begin{theorem}\label{T:FTC1}
\emph{\textbf{Fundamental Theorem of Calculus Part I.}} If $F:I\to \mathbb{R}$ is differentiable at every point in $I$, then $f$ is Sequential Henstock integrable and 
$\int_a^bf=F(b)-F(a)$ \emph{\cite{Abb01}}.
\end{theorem}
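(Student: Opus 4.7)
\bigskip

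\noindent \textbf{Proof Proposal.} The plan is to leverage Theorem \ref{T:e/d.equiv} and prove the statement in the $\epsilon-\delta$ formulation: for every $\epsilon > 0$ I will construct a gauge $\delta(x) > 0$ on $I = [a,b]$ so that every $\delta(x)$-fine tagged partition $P = \{([x_{i-1}, x_i], t_i)\}_{i=1}^k$ satisfies $|S(f,P) - (F(b) - F(a))| < \epsilon$, where $f = F'$. Once this is established, Theorem \ref{T:e/d.equiv} immediately upgrades the conclusion to Sequential Henstock integrability with the same integral value.

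First I would use the pointwise differentiability of $F$ to manufacture the gauge. For each $x \in I$, differentiability furnishes a $\delta(x) > 0$ with
\[
\left| F(y) - F(x) - f(x)(y - x) \right| \le \frac{\epsilon}{b - a} \, |y - x| \qquad \text{whenever } y \in I,\ |y - x| < \delta(x).
\]
By Cousin's Lemma (Lemma \ref{L:Cousin}), $\delta(x)$-fine tagged partitions of $I$ exist. Next, for any such $P$ and any subinterval $[x_{i-1}, x_i]$ with tag $t_i$, both $t_i - x_{i-1}$ and $x_i - t_i$ are bounded by $x_i - x_{i-1} < \delta(t_i)$. Applying the pointwise inequality above at the pairs $(t_i, x_{i-1})$ and $(t_i, x_i)$ separately and combining via the triangle inequality yields a straddle estimate
\[
\left| F(x_i) - F(x_{i-1}) - f(t_i)(x_i - x_{i-1}) \right| \le \frac{\epsilon}{b - a} (x_i - x_{i-1}).
\]
Telescoping $F(b) - F(a) = \sum_{i=1}^k \bigl[ F(x_i) - F(x_{i-1}) \bigr]$ and summing the subinterval estimates then gives $\left| S(f,P) - (F(b) - F(a)) \right| \le \epsilon$, which concludes the $\epsilon-\delta$ argument.

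The main obstacle I expect is the straddle step on each subinterval. Because the tag $t_i$ generally lies in the interior of $[x_{i-1}, x_i]$, neither endpoint plays the role of the base point $x$ in the definition of $F'(x)$; the remedy is to split at the tag and apply the differentiability inequality on each half, which the $\delta(x)$-fine condition precisely accommodates since $x_i - x_{i-1} < \delta(t_i)$ forces both halves to sit inside the ball of radius $\delta(t_i)$ about $t_i$. The degenerate cases in which $t_i$ coincides with $x_{i-1}$ or $x_i$ collapse one of the two straddle terms to zero and so cause no difficulty. A minor bookkeeping point is that the differentiability inequality is trivially valid at $y = x$ as well, so the strict inequality $0 < |y - x|$ is not actually needed once the two halves are recombined.
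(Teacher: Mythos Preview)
Your proposal is correct and follows essentially the same route as the paper: construct a gauge from the pointwise differentiability estimate, split each subinterval at its tag (what the paper packages as the right-left procedure of Definition~\ref{D:rightleft}), and telescope. The only cosmetic difference is that you first reduce to the $\epsilon-\delta$ formulation via Theorem~\ref{T:e/d.equiv}, whereas the paper phrases the same argument directly in sequential language.
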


\begin{proof}
Let $f=F'$ and $\epsilon>0$. Since $F$ is differentiable for every $x\in I$, there exists a $\delta_N(x) \in \{\delta_n(x)\}_{n=1}^\infty$ such that for each $x,t\in I$ and for $n\geq N$, if $0<|x-t|<\delta_n(t),$ we have
\[
\left| \frac{F(x)-F(t)}{x-t}-F'(t) \right| <\epsilon
\]
Then
\[
|F(x)-F(t)-F'(t)(x-t) | <\frac{\epsilon|x-t|}{b-a}.
\]
Further, $P_n \ll \delta_n(x)$ for all  $x,t\in I$ implies
\begin{align}
|S(f,P_n) - [F(b)-F(a)]| &= \left|\sum_{i=1}^kf(t_i)l_n(I_i)- [F(b)-F(a)]\right| \notag \\
&= \left|\sum_{i=1}^kf(t_i)(x_i-x_{i-1})_{\delta_n}- \sum_{i=1}^k[F(x_i)-F(x_{i-1})] \right| \notag \\
&= \left|\sum_{i=1}^k(f(t_i)(x_i-x_{i-1})_{\delta_n}- [F(x_i)-F(x_{i-1})]) \right| \notag \\
&\leq \sum_{i=1}^k|f(t_i)(x_i-x_{i-1})_{\delta_n}- [F(x_i)-F(x_{i-1})]| \label{E:FTC1}
\end{align}
Using the right-left procedure as a basis to assume that no tag $t_i$ appears as an endpoint of any subinterval, we split
apart each subinterval at its tag \cite{Wel11} so that $[x_{i-1},x_i]$ becomes $[x_{i-1},t_i] \cup [t_i,x_i]$. We also employ the 
telescoping sum to allow us to estimate the resulting inequalities. Now,  (\ref{E:FTC1}) becomes
\begin{align*}
|S(f,P_n) - [F(b)-F(a)]| &\leq \sum_{i=1}^k|f(t_i)(x_i-x_{i-1})_{\delta_n}- [F(x_i)-F(x_{i-1})]|  \\
&= \sum_{i=1}^k|f(t_i)(x_i-t_i)_{\delta_n}- [F(x_i)-F(t_i)] | \\
&+ \sum_{i=1}^k|f(t_i)(t_i-x_{i-1})_{\delta_n}- [F(t_i)-F(x_{i-1})]| \\
&<\frac{\epsilon \sum_{i=1}^k |x_i-t_i|}{b-a} + \frac{\epsilon \sum_{i=1}^k |t_i-x_{i-1}|}{b-a} = \frac{\epsilon \sum_{i=1}^k |x_i-t_i+t_i-x_{i-1}|}{b-a} \\
&=\frac{\epsilon \sum_{i=1}^k |x_i-x_{i-1}|}{b-a}=\frac{\epsilon (b-a)}{b-a}=\epsilon.
\end{align*}
Thus, $f\in H^*(I)$ and $\int_a^bf=F(b)-F(a)$.

\end{proof}

\begin{theorem}\label{T:FTC2}
\emph{\textbf{Fundamental Theorem of Calculus Part II.}} Let $f\in H^*([a,b])$ and let $F(x)=\int_a^xf$ for each $x\in [a,b]$. 
If $f$ is continuous on $[a,b]$, then $F$ is differentiable at $x\in [a,b]$ with $F'=f$ \emph{\cite{Swa01}}.
\end{theorem}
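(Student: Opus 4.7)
The plan is to show that for each $x \in [a,b]$,
\[
\lim_{h \to 0} \frac{F(x+h) - F(x)}{h} = f(x),
\]
with the natural one-sided interpretation at the endpoints. First I would invoke Theorem \ref{T:decompos} to guarantee that $f \in H^*([a,x])$ and $f \in H^*([x,x+h])$ for all sufficiently small $h$, so that $F$ is well-defined on $[a,b]$. Then, using the Additivity Theorem (Theorem \ref{T:additivity}) on the partition $a < x < x+h$ (assuming $h>0$), I obtain
\[
F(x+h) - F(x) = \int_a^{x+h} f - \int_a^x f = \int_x^{x+h} f.
\]

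The key trick is to rewrite the difference quotient minus $f(x)$ as a single integral of a small function. Since the constant function $y \mapsto f(x)$ integrates to $f(x)\cdot h$ over $[x,x+h]$, and since $f - f(x) \in H^*([x,x+h])$ by parts (2) and (3) of Theorem \ref{T:algprop}, I would write
\[
\frac{F(x+h) - F(x)}{h} - f(x) = \frac{1}{h}\int_x^{x+h} \bigl(f(y) - f(x)\bigr)\,dy.
\]
Now continuity of $f$ at $x$ provides, for any $\epsilon > 0$, a $\delta > 0$ such that $|y - x| < \delta$ implies $|f(y) - f(x)| < \epsilon$. For $0 < h < \delta$, the integrand satisfies $|f(y) - f(x)| < \epsilon$ uniformly on $[x, x+h]$, so part (6) of Theorem \ref{T:algprop} gives
\[
\left| \int_x^{x+h} \bigl(f(y) - f(x)\bigr)\,dy \right| \leq \epsilon \cdot h,
\]
hence the difference quotient is within $\epsilon$ of $f(x)$.

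For $h < 0$ I would run the symmetric argument on the interval $[x+h, x]$, using additivity in the form $F(x) - F(x+h) = \int_{x+h}^x f$ and the same continuity estimate; dividing by the (negative) $h$ preserves the final bound in absolute value. At $x = a$ only $h > 0$ is considered, and at $x = b$ only $h < 0$, yielding the appropriate one-sided derivatives at the endpoints.

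The proof is essentially routine once the Sequential Henstock integral has been shown to satisfy additivity, linearity, and the bound estimate $|\int_a^b f| \leq M(b-a)$, all of which are already in hand. The main obstacle, therefore, is not conceptual but notational: one must confirm carefully that part (6) of Theorem \ref{T:algprop} can be applied to $f - f(x)$ on the subinterval $[x, x+h]$ (which requires Theorem \ref{T:decompos} to restrict $f$ and linearity to form $f - f(x)$ as a Sequential Henstock integrable function there), and handle the sign of $h$ symmetrically so that a single $\epsilon$-$\delta$ statement covers both cases.
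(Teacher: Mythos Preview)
Your proposal is correct and follows essentially the same route as the paper: both use Theorem~\ref{T:decompos} and Theorem~\ref{T:additivity} to write $F(x+h)-F(x)=\int_x^{x+h}f$, invoke continuity of $f$ at $x$ to control $|f(y)-f(x)|$ on the small interval, and then estimate the difference quotient, treating the case $h<0$ symmetrically. The only cosmetic difference is that the paper sandwiches $f(t)$ between $f(x)\pm\epsilon$ and applies the monotonicity property (Theorem~\ref{T:algprop}.4), whereas you subtract $f(x)$ first and apply the absolute bound (Theorem~\ref{T:algprop}.6); these are interchangeable.
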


\begin{proof}
Let $\epsilon>0$. Since $F$ is continuous on $[a,b]$, there exists a $\delta_N(x) \in \{\delta_n(x)\}_{n=1}^\infty$ such that for each $x,t\in [a,b]$ and for $n\geq N$, if $0<|x-t|<\delta_n(t),$ we have $|f(x)-f(t)| <\epsilon$. This yields 
\begin{equation}\label{E:FTC2}
f(x)-\epsilon<f(t)<f(x)+\epsilon.
\end{equation}

Now, let $0<|y-x|<\delta_n(t)$ such that $a<x\leq t \leq y<b$ and integrate over $[x,y]$ \cite{Swa01}. From Theorem \ref{T:decompos}, we know that  $f\in H^*([x,y])$. Further, from Theorem \ref{T:additivity} and the fact that $F(x)=\int_a^xf$, we observe that
\[
\int_x^yf=\int_a^yf-\int_a^xf=F(y)-F(x)
\]
Thus by Theorem \ref{T:algprop}.4 and (\ref{E:FTC2}), we have 
\begin{align}
 & \int_x^y(f(x)-  \epsilon)dt <  \int_x^yf(t)dt< \int_x^y(f(x)+ \epsilon)dt  \notag \\
 &\Rightarrow (f(x)-\epsilon) (y-x)< \int_x^yf(t)dt< (f(x)+\epsilon)(y-x)  \notag  \\
&\Rightarrow  f(x)-\epsilon < \frac{\int_x^yf(t)}{y-x}< f(x)+\epsilon  \notag  \\
&\Rightarrow  \left| \frac{\int_x^yf(t)}{y-x}-f(x) \right| \notag \\
&= \left| \frac{F(y)-F(x)}{y-x}-f(x) \right| <\epsilon. \label{E:FTC2.2}
\end{align}

\noindent A similar argument shows that (\ref{E:FTC2.2}) holds for $y< x$. Therefore, $F'(x)=f(x)$ and $F$ is differentiable 
on $[a,b]$ since $x$ is arbitrary in $[a,b]$.
\end{proof}

\begin{theorem}\label{T:sub}

\emph{\textbf{Substitution.}} Let $f_1, f_2$ be differentiable on $[a,b]$. Then $f'_1f_2\in H^*([a,b])$ if and only if 
$f_1f'_2\in H^*([a,b])$. Then 
we have $\int_a^bf'_1f_2 =f_1(b)f_2(b)-f_1(a)f_2(a)- \int_a^bf_1f'_2$ \emph{\cite{Swa01}}.
\end{theorem}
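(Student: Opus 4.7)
The plan is to combine the product rule for derivatives with the Fundamental Theorem of Calculus Part I (Theorem \ref{T:FTC1}) and the linearity properties of the Sequential Henstock integral (Theorem \ref{T:algprop}). First, I would observe that since $f_1$ and $f_2$ are differentiable on $[a,b]$, their product $F(x) = f_1(x)f_2(x)$ is differentiable on $[a,b]$ with derivative
\[
F'(x) = f'_1(x)f_2(x) + f_1(x)f'_2(x)
\]
for every $x \in [a,b]$, by the usual product rule from elementary calculus.

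Next, I would apply Theorem \ref{T:FTC1} to the differentiable function $F = f_1 f_2$. This immediately yields $F' \in H^*([a,b])$ together with the evaluation
\[
\int_a^b F' = f_1(b)f_2(b) - f_1(a)f_2(a).
\]
In other words, the sum $f'_1 f_2 + f_1 f'_2$ is Sequential Henstock integrable on $[a,b]$, and its integral equals the boundary term $f_1(b)f_2(b) - f_1(a)f_2(a)$.

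For the equivalence, suppose $f'_1 f_2 \in H^*([a,b])$. Then by parts 2 and 3 of Theorem \ref{T:algprop} (using scalar $c = -1$), the difference
\[
f_1 f'_2 = F' - f'_1 f_2
\]
is also in $H^*([a,b])$, with $\int_a^b f_1 f'_2 = \int_a^b F' - \int_a^b f'_1 f_2$. The reverse implication is symmetric. Rearranging this equality produces the desired formula $\int_a^b f'_1 f_2 = f_1(b)f_2(b) - f_1(a)f_2(a) - \int_a^b f_1 f'_2$.

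The main obstacle, such as it is, is recognizing that no new gauge construction is needed: the heavy lifting has already been done in the proof of Theorem \ref{T:FTC1}, and the remainder is purely algebraic, relying on closure of $H^*([a,b])$ under scalar multiplication and addition. No hypothesis of continuity or absolute integrability on $f'_1$ or $f'_2$ individually is required, which is precisely the advantage of formulating integration by parts within the Sequential Henstock framework rather than, say, the Lebesgue or Riemann frameworks.
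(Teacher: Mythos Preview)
Your proposal is correct and follows essentially the same route as the paper: apply the product rule to $f_1f_2$, invoke Theorem~\ref{T:FTC1} to conclude that $(f_1f_2)'=f'_1f_2+f_1f'_2$ lies in $H^*([a,b])$ with integral $f_1(b)f_2(b)-f_1(a)f_2(a)$, and then use the linearity in Theorem~\ref{T:algprop} to obtain both the equivalence and the integration-by-parts formula. If anything, your write-up is slightly more careful than the paper's in making the biconditional explicit.
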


\begin{proof}
From the product rule, we have $(f_1f_2)'=f'_1f_2+f_1f'_2=\alpha$. Using Theorem \ref{T:algprop}.3, the Fundamental Theorem of Calculus Part I, and the 
fact that $f_1f_2$ is an anti-derivative for $\alpha$, we observe that
\begin{align}
 \int_a^bf'_1f_2+\int_a^bf_1f'_2  \  \Leftrightarrow \ \int_a^b(f'_1f_2+f_1f'_2)   \   \Leftrightarrow  \  \int_a^b(f_1f_2)'   \  \Leftrightarrow  \   f_1(b)f_2(b)-f_1(a)f_2(a)  \label{sub1} 
\end{align}
Now, equating the far left and right sides of (\ref{sub1}) yields
\[
 \int_a^bf'_1f_2 \ =  \ f_1(b)f_2(b)-f_1(a)f_2(a)  -\int_a^bf_1f'_2
\]
Therefore, $f'_1f_2\in H^*([a,b])$ if and only if 
$f_1f'_2\in H^*([a,b])$.

\end{proof}

\begin{theorem}\label{T:parts}

\emph{\textbf{Integration by Parts.}} Let $f, \psi$ be differentiable on $[a,b]$. Then 
\[
\int_{\psi(a)}^{\psi(b)}f'=\int_a^b(f' \circ \psi)\psi' \qquad \emph{\cite{Swa01}}.
\]
\end{theorem}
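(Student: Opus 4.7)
The plan is to recognize the claimed formula as a direct consequence of the chain rule together with the Fundamental Theorem of Calculus Part I (Theorem~\ref{T:FTC1}), applied on two different intervals. The key observation is that both sides of the asserted equality should evaluate to the same boundary difference $f(\psi(b))-f(\psi(a))$, so the entire argument reduces to (i) producing an anti-derivative for each integrand and (ii) invoking Theorem~\ref{T:FTC1} to compute each integral.

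First, I would set $g = f \circ \psi$ on $[a,b]$. Since $\psi$ is differentiable on $[a,b]$ and $f$ is differentiable on a domain containing $\psi([a,b])$, the chain rule yields
\[
g'(x) = f'(\psi(x))\,\psi'(x) = \bigl((f'\circ \psi)\,\psi'\bigr)(x) \qquad \text{for every } x\in[a,b].
\]
Thus $g$ is differentiable at every point of $[a,b]$ with derivative equal to the integrand on the right-hand side. By Theorem~\ref{T:FTC1}, $g' \in H^*([a,b])$ and
\[
\int_a^b (f'\circ \psi)\,\psi' \;=\; \int_a^b g' \;=\; g(b)-g(a) \;=\; f(\psi(b)) - f(\psi(a)).
\]

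Next, I would apply Theorem~\ref{T:FTC1} directly to $f$ on the interval with endpoints $\psi(a)$ and $\psi(b)$. Since $f$ is differentiable there, $f' \in H^*$ on this interval and
\[
\int_{\psi(a)}^{\psi(b)} f' \;=\; f(\psi(b)) - f(\psi(a)).
\]
Equating the two displayed equalities gives the desired identity. If $\psi(a) > \psi(b)$, I would adopt the standard convention $\int_\beta^\alpha f' = -\int_\alpha^\beta f'$ so that the boundary evaluation $f(\psi(b))-f(\psi(a))$ is preserved, and the argument proceeds unchanged.

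The main obstacle I anticipate is not any intricate estimate but rather the hypothesis on the domain of $f$: the statement only says $f$ is differentiable on $[a,b]$, whereas the chain rule computation and the application of Theorem~\ref{T:FTC1} to $\int_{\psi(a)}^{\psi(b)} f'$ both require $f$ to be differentiable at every point of $\psi([a,b])$. I would address this by tacitly interpreting the hypothesis as ``$f$ is differentiable on an interval containing $\psi([a,b])$,'' which is the natural reading consistent with the right-hand integral being defined. No convergence or gauge-construction step is needed, since all the analytic content has already been packaged into Theorem~\ref{T:FTC1}.
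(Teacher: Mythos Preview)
Your proposal is correct and follows essentially the same route as the paper: apply the chain rule to identify $f\circ\psi$ as an anti-derivative of $(f'\circ\psi)\psi'$, invoke Theorem~\ref{T:FTC1} on each side to reduce both integrals to $f(\psi(b))-f(\psi(a))$, and equate. Your remarks on the domain of $f$ and the orientation when $\psi(a)>\psi(b)$ are valid caveats that the paper's proof simply glosses over.
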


\begin{proof}
Using the chain rule to find an anti-derivative, we have $(f \circ \psi)' =(f' \circ \psi)\psi'.$ Now, we apply the Fundamental Theorem of Calculus twice and use the transitive property of $\mathbb{R}.$
\begin{align}
\int_{\psi(a)}^{\psi(b)}f' &= f \circ \psi(b) - f \circ \psi(a) \label{parts1} \\
 \int_a^b(f' \circ \psi)\psi' = \int_a^b(f \circ \psi)'&= f \circ \psi(b) - f \circ \psi(a) \label{parts2} 
\end{align}
By (\ref{parts1}) and (\ref{parts2}), we have $\int_{\psi(a)}^{\psi(b)}f'=\int_a^b(f' \circ \psi)\psi'.$

\end{proof}
\bigskip
\begin{center}
\textbf{Uniform Convergence Theorem}
\end{center}

Next, we prove the Uniform Convergence Theorem for the Sequential Henstock integral, which is a weak but useful result \cite{Bar01}. The proof follows Bartle's \cite{Bar01}.

\begin{theorem}\label{uniform}
\emph{\textbf{Uniform Convergence.}} If $\{f_k\} \in H^*(I)$ converges to $f$ uniformly on $I$, then $f\in H^*(I)$ and 
$$\int_If=\lim_{k \to \infty} \int_If_k \qquad \emph{\cite{Bar01}}.$$

\end{theorem}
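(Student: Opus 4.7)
The plan is to first show that the sequence of integrals $\{\int_I f_k\}$ is Cauchy in $\mathbb{R}$ and so converges to some limit $A$, and then to construct a sequence of gauges $\{\delta_n(x)\}_{n=1}^\infty$ witnessing $f\in H^*(I)$ with integral $A$, via a diagonal-style selection drawn from gauges attached to the $f_k$.

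First I would verify the convergence of $\{\int_I f_k\}$. Uniform convergence makes $\{f_k\}$ uniformly Cauchy, so given $\epsilon>0$ there is $K$ with $\sup_{x\in I}|f_k(x)-f_j(x)|<\epsilon/(b-a)$ for $k,j\geq K$. Since $f_k-f_j\in H^*(I)$ by parts 2 and 3 of Theorem \ref{T:algprop}, the bound in Theorem \ref{T:algprop}.6 gives $\left|\int_I f_k-\int_I f_j\right|\leq (b-a)\sup_x|f_k-f_j|<\epsilon$. Completeness of $\mathbb{R}$ then yields an $A\in\mathbb{R}$ with $\int_I f_k\to A$.

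Next I would build the gauges for $f$. For each $n\in\mathbb{N}$, using uniform convergence I would pick an increasing index $k_n$ with $\sup_x|f(x)-f_{k_n}(x)|<1/(2n(b-a))$. Since $f_{k_n}\in H^*(I)$, its associated sequence of gauges contains one, call it $\eta_n(x)$, fine enough that every $\eta_n(x)$-fine tagged partition $P$ satisfies $\left|S(f_{k_n},P)-\int_I f_{k_n}\right|<1/(2n)$. Set $\delta_n(x):=\eta_n(x)$. For any $\delta_n(x)$-fine tagged partition $P_n$, the triangle inequality gives
\[
|S(f,P_n)-A|\leq \bigl|S(f-f_{k_n},P_n)\bigr| + \left|S(f_{k_n},P_n)-\int_I f_{k_n}\right| + \left|\int_I f_{k_n}-A\right|.
\]
The first summand is at most $(b-a)\cdot 1/(2n(b-a))=1/(2n)$ by the uniform bound; the second is at most $1/(2n)$ by the choice of $\eta_n$; and the third tends to $0$ as $n\to\infty$ since $k_n\to\infty$ and $\int_I f_k\to A$. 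Hence $S(f,P_n)\to A$, so $f\in H^*(I)$, and Theorem \ref{T:unique} combined with the construction identifies $\int_I f=A=\lim_{k\to\infty}\int_I f_k$.

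The main obstacle is the diagonal bookkeeping in the second step: both the uniform approximation $f\approx f_{k_n}$ and the Riemann-sum approximation of $\int_I f_{k_n}$ must improve in tandem with $n$, which requires coordinating the choice of $k_n$ with the extraction of a sufficiently fine gauge from the sequence attached to $f_{k_n}$. Once this diagonal choice is in place, the triangle-inequality estimate and the use of Theorem \ref{T:algprop}.6 for the sup-norm control are routine.
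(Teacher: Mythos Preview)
Your proof is correct and follows essentially the same skeleton as the paper's: first use uniform convergence and the linearity/boundedness properties (Theorem \ref{T:algprop}) to show $\{\int_I f_k\}$ is Cauchy with limit $A$, then control $|S(f,P_n)-A|$ by the same three-term triangle inequality splitting through $S(f_{k},P_n)$ and $\int_I f_{k}$. The only organizational difference is that the paper, for a given $\epsilon$, fixes a single index $\gamma$ and borrows the gauge sequence attached to $f_\gamma$, whereas you diagonalize, choosing a fresh $k_n$ and gauge $\eta_n$ at each stage; your version has the minor advantage of producing the witnessing sequence $\{\delta_n\}$ for $f$ explicitly in the sense of Definition \ref{D:SeqHen}, while the paper's argument is phrased more in the $\epsilon$--$\delta$ style and leans implicitly on Theorem \ref{T:e/d.equiv}.
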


\begin{proof}

 Let  $\epsilon>0$. Since $f_k \to f$ uniformly, there exists a $K\in \mathbb{N}$ such that $k\geq K$ implies $|f_k-f|<\epsilon$ for all points in $I$. Thus, we have
\begin{align}
|f_h-f_k| &= |f_h-f+f-f_k| \notag \\
&\leq |f_h-f|+|f-f_k| \notag \\
&< \epsilon+\epsilon=2\epsilon \ \qquad  \qquad  \qquad  \text{for } h,k\geq K \text{ and } x\in I. \label{unf1}
\end{align}

\noindent Let $I=[a,b]$. From Theorem \ref{T:algprop}.4,  \ref{T:algprop}.3, and (\ref{unf1}), we observe that $\text{for } h,k\geq K 
\text{ and } x\in I$
\begin{align}
&-2\epsilon < f_h-f_k < 2\epsilon  \notag \\
&  \Rightarrow \ -\int_a^b 2\epsilon < \int_a^b (f_h-f_k) < \int_a^b 2\epsilon \notag \\
& \Rightarrow -2\epsilon(b-a) < \int_a^b f_h-\int_a^bf_k < 2\epsilon(b-a)  \notag \\
&  \Rightarrow \left| \int_a^b f_h-\int_a^bf_k \right| < 2\epsilon(b-a) \label{unf2}.
\end{align}

\noindent Hence, $\{\int_If_k\}_{k=1}^\infty$ is a Cauchy Sequence in $\mathbb{R}$, which converges to a real number we shall denote by $A$; equivalently, $\lim_{k \to \infty} \int_If_k=A$. We now show that $f\in H^*(I)$ and $\int_If=A$.

Let $\epsilon>0$. For any sequence of tagged partitions $\{P_n\}$, there exists a $K\in \mathbb{N}$ such that $k\geq K$ implies 
\begin{align*}
|S(f_k,P_n)-S(f,P_n)| &= \left |\sum_{i=1}^pf_k(t_i)l_n(I_i) - \sum_{i=1}^pf(t_i)l_n(I_i)\right| \\
& \leq \sum_{i=1}^p |[f_k(t_i)-f(t_i)]l_n(I_i)|  \\
& < \epsilon \sum_{i=1}^p  l_n(I_i) \qquad \text{(since $f_k \to f$ uniformly)} \\
&=\epsilon(b-a) \qquad \ \ \text{(by telescoping sum of the endpoints of $I_i$.)}
\end{align*}

Now, fix $\gamma \geq K$ such that $\int_If_\gamma \to A$ so that we have 
$\left|\int_If_\gamma - A \right| <\epsilon$. Since $f_\gamma \in H^*(I)$, for $\epsilon>0$, there exists a 
$\delta_{\gamma_N}(x) \in \{\delta_{\gamma_n}(x)\}_{n=1}^\infty$ such that for $n\geq N$ and for all $\delta_{\gamma_n}(x)$-fine tagged partitions $P_n$, we have
\[
\left| S(f_\gamma,P_n)-\int_If_\gamma \right| <\epsilon.
\]
Consequently,
\begin{align*}
|S(f,P_n)-A| &= \left |S(f,P_n)-S(f_\gamma,P_n)+S(f_\gamma,P_n)-\int_If_\gamma+\int_If_\gamma -A\right| \\
& \leq \left |S(f,P_n)-S(f_\gamma,P_n) \right|+\left|S(f_\gamma,P_n)-\int_If_\gamma \right| +\left| \int_If_\gamma -A\right| \\
&<\epsilon(b-a) + \epsilon +\epsilon = \epsilon(b-a+2).
\end{align*}

\noindent Since $\epsilon$ is arbitrary, we conclude that $f\in H^*(I)$ and 
$\int_If=\lim_{k \to \infty} \int_If_k.$
\end{proof}

We observe that uniform convergence implies convergence of the sequence of Riemann sums of $f_k$ to the Riemann sum 
of the limit function $f$. In fact, a stronger theorem is possible if the convergence of $f_k \to f$ is only pointwise. Then, the limit can be interchanged such that $\int_If=\lim_{k \to \infty} \int_If_k$ if and only if $S(f_k,P_n)\to S(f,P_n)$ as $k\to \infty$. See \cite{Bar01}, pg. 127-128, for a proof of this interesting result.

\begin{center}
\textbf{Henstock's Lemma \& the Monotone Convergence Theorem}
\end{center}

We now establish Henstock's Lemma for the Sequential Henstock integral in order to prove the Monotone Convergence Theorem for the Sequential Henstock integral. We recall that a monotone function on $\mathbb{R}$ is one that is either non-decreasing or non-increasing over its entire domain.

\begin{lemma}\label{henstock}
\emph{\textbf{Henstock's Lemma.}} Let $f:[a,b]\to \mathbb{R}$ be a Sequential Henstock integrable function. If
$P_n^*=\{([x_{i-1}, x_i], t_i)\}_{i=1}^p$ is any $\delta_n(x)$-fine set of disjoint, tagged subintervals of $[a,b]$, then
\begin{equation}\label{hen1}
\left| S(f,P^*_n)-\sum_{i=1}^p \int_{x_{i-1}}^{x_i}f \right| \leq \epsilon.
\end{equation}

\noindent Further,
\begin{equation}\label{hen2}
\sum_{i=1}^p \left| f(t_i)(x_i-x_{i-1})- \int_{x_{i-1}}^{x_i}f \right| \leq 2\epsilon \qquad \emph{\cite{Wel11}}.
\end{equation}
\end{lemma}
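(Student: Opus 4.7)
The plan is to extend the partial collection $P_n^*$ to a full $\delta_n(x)$-fine tagged partition of $[a,b]$, apply Sequential Henstock integrability to that extended partition, and then subtract off the contributions from the ``gaps.'' Since $f$ is Sequential Henstock integrable on each complementary subinterval by Theorem \ref{T:decompos}, the gap error can be made arbitrarily small, leaving only the bound $\epsilon$ from the full partition.

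For given $\epsilon > 0$, first select $\delta_N(x)$ from $\{\delta_n(x)\}_{n=1}^\infty$ so that for every $n \geq N$ and every $\delta_n(x)$-fine tagged partition $P_n$ of $[a,b]$ we have $|S(f,P_n) - \int_a^b f| < \epsilon$. The complement $[a,b] \setminus \bigcup_{i=1}^p [x_{i-1}, x_i]$ has closure equal to a finite union of closed intervals $J_1, \ldots, J_q$ abutting the chosen subintervals. By Theorem \ref{T:decompos}, $f \in H^*(J_j)$ for each $j$, and by Cousin's Lemma, $\delta_n(x)$-fine tagged partitions of each $J_j$ exist.

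Now fix any $\eta > 0$, and choose $n \geq N$ large enough that on each $J_j$ there is a $\delta_n(x)$-fine tagged partition $Q_j$ satisfying $|S(f,Q_j) - \int_{J_j} f| < \eta/q$. Combining $P_n^*$ with $Q_1, \ldots, Q_q$ yields a full $\delta_n(x)$-fine tagged partition $P_n$ of $[a,b]$ with
\begin{align*}
S(f,P_n) &= S(f,P_n^*) + \sum_{j=1}^q S(f,Q_j), \\
\int_a^b f &= \sum_{i=1}^p \int_{x_{i-1}}^{x_i} f + \sum_{j=1}^q \int_{J_j} f,
\end{align*}
the latter coming from iterated application of Theorem \ref{T:additivity}. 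Subtracting and applying the triangle inequality,
\[
\left| S(f,P_n^*) - \sum_{i=1}^p \int_{x_{i-1}}^{x_i} f \right| \leq \left| S(f,P_n) - \int_a^b f \right| + \sum_{j=1}^q \left| S(f,Q_j) - \int_{J_j} f \right| < \epsilon + \eta.
\]
Since $\eta > 0$ was arbitrary, inequality (\ref{hen1}) follows.

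For (\ref{hen2}), partition $\{1, \ldots, p\}$ into $I^+ = \{i : f(t_i)(x_i - x_{i-1}) - \int_{x_{i-1}}^{x_i} f \geq 0\}$ and $I^- = \{1, \ldots, p\} \setminus I^+$. Each indexes a $\delta_n(x)$-fine collection of disjoint tagged subintervals, so (\ref{hen1}) applied separately yields
\[
\sum_{i \in I^+} \left[ f(t_i)(x_i - x_{i-1}) - \int_{x_{i-1}}^{x_i} f \right] \leq \epsilon \quad \text{and} \quad \sum_{i \in I^-} \left[ \int_{x_{i-1}}^{x_i} f - f(t_i)(x_i - x_{i-1}) \right] \leq \epsilon.
\]
Summing these two non-negative quantities gives the full sum of absolute values, bounded by $2\epsilon$. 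The main obstacle lies in coordinating a single $n$ that simultaneously controls the global error on $[a,b]$ and each of the $q$ gap errors; the finiteness of $q$ together with the paper's standing assumption that $\delta_{n+1}(x) < \delta_n(x)$ makes this achievable.
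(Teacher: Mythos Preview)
Your argument is correct and follows essentially the same route as the paper: complete $P_n^*$ to a full $\delta_n$-fine tagged partition by partitioning the complementary intervals $J_j$, apply integrability on $[a,b]$ for the $\epsilon$ bound and integrability on each $J_j$ for the $\eta$ bound, then for (\ref{hen2}) split into the nonnegative and negative index sets and apply (\ref{hen1}) to each. The only cosmetic difference is that the paper introduces separate gauge sequences $\{\delta_{j_n}\}$ on each gap and takes them $\leq \delta_n$, whereas you push further along the single global sequence (relying on the decreasing-gauge convention); be careful, though, not to reuse the symbol $n$ for both the fixed index attached to $P_n^*$ and the larger index you select for the gaps.
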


\begin{proof}
Let $\epsilon>0$. Since $f\in H^*(I)$, there exists a $\delta_N(x) \in \{\delta_n(x)\}_{n=1}^\infty$ such that for $n\geq N$ and for $P_n \ll \delta_n(x)$, we have 
\begin{equation}\label{hen3}
\left |S(f,P_n)-\int_If \right|\leq \epsilon.
\end{equation}

 We shall utilize the subintervals of $I=[a,b]$ that do not overlap any of the subintervals contained in $P^*_n$. We then form tagged partitions of these subintervals not in $P^*_n$ that are fine enough to force their Riemann sums to be arbitrarily small \cite{Bar01}. If $P^*_n = P_n$, then there is nothing to prove. Otherwise, let $\{I_j \ \vert 1\leq j \leq m \}$ be a collection of non-overlapping, closed subintervals of $[a,b]$, and let $S$ denote the set $\cup_{i=1}^p[x_{i-1}, x_i]$ of closed subintervals in $P^*_n$ such that $\{I_j\} \cup S$ is a partition of $[a,b]$. 

Let $\eta>0$. By Theorem \ref{T:decompos}, we know that $f$ is integrable on each subinterval $I_j$, for $j=1,2,\dots,m$. Thus, 
there exists a $\delta_{j_N}(x) \in \{\delta_{j_n}(x)\}_{n=1}^\infty$ such that for $n\geq N$ and for all $\delta_{j_n}(x)$-fine tagged partitions $P_{j_n}$ of $I_j$, we have
 \begin{equation}\label{hen4}
\left |S(f,P_{j_n})-\int_{I_j}f \right|\leq \frac{\eta}{m}.
\end{equation}

We may assume that $\delta_{j_n}(x)\leq \delta_n(x)$ for all $x\in I_j$. Let $P_n$  denote the tagged partition of $I$ such that
 $P_n=P^*_n + \cup_{i=1}^mP_{j_n}$ for all $n\geq N$. We observe that $P_n$ is $\delta_n(x)$-fine and that (\ref{hen3}) holds. In addition, if $k=p+m\in \mathbb{N}$, we have
 \[
 \sum_{i=1}^k |f(t_i)l_n(I_i)| = \sum_{i=1}^p |f(t_i)l_n(I_i)| + \sum_{j=1}^m \sum_{i=1}^{r_j} |f(t_{j_i})l_{j_n}(I_{j_i})| 
 \]
which yields
 \begin{equation}\label{hen5}
 S(f,P_n)= S(f,P^*_n)+ \sum_{j=1}^mS(f,P_{j_n})
\end{equation}
 
 Now, since $S$ and $\{I_j\}$ are collections of closed subintervals of $I=[a,b]$, by Theorem \ref{T:additivity} and \ref{T:decompos}, we have
 \begin{equation}\label{hen6}
 \int_If= \int_Sf+\sum_{j=1}^m \int_{I_j}f.
\end{equation}
 
 \noindent Putting together (\ref{hen3}), (\ref{hen4}), (\ref{hen5}), and (\ref{hen6}), we have
 \begin{align*}
\left| S(f,P^*_n)- \int_Sf \right| &= \left| S(f,P^*_n)-\sum_{i=1}^p \int_{x_{i-1}}^{x_i}f \right| \\
&= \left| S(f,P_n)-\sum_{i=1}^m S(f,P_{j_n}) - \left(  \int_If -\sum_{i=1}^m \int_{I_j}f \right) \right| \\
&\leq \left| S(f,P_n)- \int_If \right| + \left|\sum_{i=1}^m S(f,P_{j_n})-\sum_{i=1}^m \int_{I_j}f  \right| \\
&\leq \left| S(f,P_n)- \int_If \right| + \sum_{i=1}^m \left|S(f,P_{j_n})- \int_{I_j}f  \right| \\
&\leq \epsilon+m\left(\frac{\eta}{m}\right)=\epsilon +\eta.
\end{align*}
Since $\eta$ is arbitrary, \ref{hen1} is proved.

Now, to prove \ref{hen2}, we separate the $p$ tagged subintervals in $P^*_n$ into two $\delta_n(x)$-fine subpartitions $\sigma_1$  and $\sigma_2$ \cite{Bar01}. Let $\sigma_1$ represent the collection of tagged subintervals such that
\[
f(t_i)l_n(I_i)-\int_{I_i}f=f(t_i)(x_i-x_{i-1})\geq0, 
\]
and let $\sigma_2$
represent the collection of tagged subintervals such that
\[
f(t_i)l_n(I_i)-\int_{I_i}f=f(t_i)(x_i-x_{i-1})<0.
\]

\noindent Applying Part 1 of the lemma, (\ref{hen1}), to both $\sigma_1$  and $\sigma_2$, we have
\begin{align}\label{hen2.1}
 \sum_{i \in\sigma_1}\left |f(t_i)l_n(I_i)-\int_{I_i}f\right| &=  \sum_{i \in\sigma_1}\left (f(t_i)l_n(I_i)-\int_{I_i}f\right) \notag \\ &=
   \left| \sum_{i \in\sigma_1} \left( f(t_i)l_n(I_i)-\int_{I_i}f \right) \right|  \leq \epsilon
\end{align}
and
\begin{align}\label{hen2.2}
 \sum_{i \in\sigma_2}\left |f(t_i)l_n(I_i)-\int_{I_i}f\right| &= - \sum_{i \in\sigma_2}\left (f(t_i)l_n(I_i)-\int_{I_i}f\right)\notag \\ &=
  \left| \sum_{i \in\sigma_2} \left( f(t_i)l_n(I_i)-\int_{I_i}f \right) \right|   \leq \epsilon.
\end{align}
 
 \noindent Adding the inequalities in (\ref{hen2.1}) and (\ref{hen2.2}), we observe that
 \[
 \sum_{i=1}^p \left| f(t_i)(x_i-x_{i-1})- \int_{x_{i-1}}^{x_i}f \right| =
 \sum_{i=1}^p \left |f(t_i)l_n(I_i)-\int_{I_i}f\right|\leq 2\epsilon
 \]
 Thus,  (\ref{hen2}) is proved.

\end{proof}

\begin{theorem}\label{mono}

\emph{\textbf{Monotone Convergence.}} Let $\{f_k\}$ be a monotone sequence of Sequential Henstock integrable functions defined on $I=[a,b]$ that converges pointwise to a limit function $f:[a,b]\to \mathbb{R}$. If $\lim_{k \to \infty} \int_If_k$ exists, then $f\in H^*(I)$ and $$\int_If=\lim_{k \to \infty} \int_If_k \qquad \emph{\cite{Bar01}}.$$
\end{theorem}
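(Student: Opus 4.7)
The plan is to leverage the $\epsilon$-$\delta$ equivalence of Theorem \ref{T:e/d.equiv} together with Henstock's Lemma (Lemma \ref{henstock}) to reduce the problem to constructing, for each $\epsilon>0$, a single gauge whose Riemann sums for $f$ lie within a constant multiple of $\epsilon$ of $A := \lim_{k\to\infty}\int_I f_k$. Once this is done, taking $\epsilon_n = 1/n$ produces the decreasing sequence of gauges required by Definition \ref{D:SeqHen}, and Theorem \ref{T:e/d.equiv} then delivers the sequential conclusion.

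First I would reduce to the case that $\{f_k\}$ is non-decreasing by replacing the sequence with $\{-f_k\}$ if necessary, so that $f_k(x) \leq f(x)$ pointwise and $\int_I f_k \nearrow A$. Given $\epsilon > 0$, pointwise convergence supplies for each $x \in I$ an index $K(x) \in \mathbb{N}$ with $0 \leq f(x) - f_{K(x)}(x) < \epsilon/(b-a)$; by enlarging $K(x)$ I may arrange that $K(x) \geq K_0$ for all $x$, where $K_0$ is chosen so that $|A - \int_I f_k| < \epsilon$ for every $k \geq K_0$. For each individual $k$, since $f_k \in H^*(I)$ and Lemma \ref{henstock} applies, I obtain a gauge $\gamma_k$ on $I$ such that every $\gamma_k$-fine set of disjoint tagged subintervals $Q$ satisfies
\[
\sum_{(J,t)\in Q}\left| f_k(t)\,l(J) - \int_J f_k \right| \leq \frac{\epsilon}{2^k}.
\]
The diagonal gauge $\delta(x) := \gamma_{K(x)}(x)$ is then the candidate for a gauge witnessing $\epsilon$-$\delta$ integrability of $f$.

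For any $\delta$-fine tagged partition $P = \{([x_{i-1},x_i],t_i)\}_{i=1}^p$ of $I$, I would write the telescoping identity
\[
S(f,P) - A = \sum_{i=1}^p \bigl[f(t_i) - f_{K(t_i)}(t_i)\bigr]\,l(I_i) + \sum_{i=1}^p \left[ f_{K(t_i)}(t_i)\,l(I_i) - \int_{I_i} f_{K(t_i)}\right] + \left(\sum_{i=1}^p \int_{I_i} f_{K(t_i)} - A\right).
\]
The first sum is at most $\epsilon$ by construction of $K(x)$. For the second, I would partition the index set $\{1,\dots,p\}$ according to the finitely many values attained by $K(t_i)$; each block has $\delta(t_i)=\gamma_k(t_i)$, so it forms a $\gamma_k$-fine disjoint tagged subcollection to which Lemma \ref{henstock} applies, giving a block bound of $\epsilon/2^k$ and a total bound of $\sum_k \epsilon/2^k \leq \epsilon$. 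For the third sum, monotonicity gives $f_{K_\ast} \leq f_{K(t_i)} \leq f_{K^\ast}$ with $K_\ast := \min_i K(t_i)$ and $K^\ast := \max_i K(t_i)$, both at least $K_0$, which sandwiches the sum between $\int_I f_{K_\ast}$ and $\int_I f_{K^\ast}$, each lying in $[A-\epsilon,A]$, yielding a bound of $\epsilon$.

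The main obstacle I anticipate is the regrouping argument in the second summand: I need to verify carefully that restricting $P$ to indices with a common value of $K(t_i)=k$ really does produce a $\gamma_k$-fine collection of disjoint tagged subintervals, so that Henstock's Lemma applies block-by-block with the sharper constant $\epsilon/2^k$ rather than a single $\epsilon$. The other delicate point is ensuring $K(x)$ is large enough that the upper and lower sandwich bounds in Term $3$ are both close to $A$; this is handled once I insist $K(x) \geq K_0$ uniformly. Combining the three bounds gives $|S(f,P)-A| \leq 3\epsilon$, and applying this with $\epsilon_n = 1/(3n)$ produces a sequence of gauges $\{\delta_n\}$ satisfying the hypothesis of Definition \ref{D:SeqHen}, completing the proof.
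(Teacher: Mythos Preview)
Your proposal is correct and follows essentially the same strategy as the paper's proof: the same three-term telescoping decomposition, the pointwise-convergence bound on the first term, the block-by-block application of Henstock's Lemma (grouping tags by the common value of $K(t_i)$) on the second, and the monotone sandwich $\int_I f_{K_\ast}\le\sum_i\int_{I_i}f_{K(t_i)}\le\int_I f_{K^\ast}$ on the third. Your version is in fact slightly cleaner in that you name the diagonal gauge $\delta(x)=\gamma_{K(x)}(x)$ explicitly, whereas the paper leaves this construction implicit; the only other differences are cosmetic choices of constants ($3\epsilon$ versus $\epsilon$) and your explicit appeal to Theorem~\ref{T:e/d.equiv} to pass back to the sequential formulation.
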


\begin{proof}
We consider the case of a non-decreasing sequence of functions. A similar result follows for a non-increasing sequence. Let $\epsilon>0$. Since $\lim_{k \to \infty} \int_If_k$ exists, there exists a $K\in \mathbb{N}$ such that $k\geq K$ implies  $|\int_If_k-A| <\tfrac{\epsilon}{3}.$ Further, since $f_k\in H^*(I)$, for each $k\in \mathbb{N}$ there exists a $\delta_{k_N}(x) \in \{\delta_{k_n}(x)\}_{n=1}^\infty$ such that for $n\geq N$ and  $P_n \ll \delta_{k_n}(x)$ on $I$, we have
 \begin{equation}\label{mono1}
\left |S(f_k,P_n)-\int_If_k \right|<\frac{\epsilon}{3\cdot 2^{k+1}}.
\end{equation}

Lastly, since $\{f_k\} \to f$ pointwise on $I$, then for each $x\in I$, there exists a positive integer $K$ such that if $k(x)\geq K$, we have
 \begin{equation}\label{mono2}
 |f_{k(x)}-f|<\frac{\epsilon}{3(b-a)}.
\end{equation}
\newline
Now, let $\{\delta_n(x)\}$ be a sequence of gauges on $I$ and let $\{P_n\}$ be a 
sequence of $\delta_n(x)$-fine partitions of $I$. We note here that while a $\delta_N(x)$ is chosen to match a particular $\epsilon$ in the definition of the Sequential Henstock integral, in this case each given $\epsilon$ necessitates a possibly different term $k$ in the sequence of functions $\{f_k\}$ in order to satisfy (\ref{mono2}) since $k$ is dependent upon the point $x\in I$ that we use. (For this reason, we write $k(x)$ instead of $k$.) This is due to the rather weak condition of pointwise convergence on $\{f_k\} \to f$. Therefore, the choice of $\delta_N(x)$ to match a given $\epsilon$ will necessarily depend on the $k(t_i)$ for $i=1,2,...,m$, where $t_i$ is a tag point of $P_n$, but for the sake of clarity, we avoid notating this each time.

 The primary part of our strategy is to use this sequence of gauges to separate the set of tagged subintervals in any given $P_n$ into classes of different widths \cite{Bar01}, \cite{Wel11} based on the corresponding $\delta_n(t_i)$, which depends on $k(t_i)$; i.e. which $f_{k(t_i)}$ in our sequence of functions we have to use for a given point $t_i$ to satisfy the pointwise convergence condition. Therefore, for a given $P_n$,  we use the finite set $\{k(t_i)\}$ for $i=1,2,...,m$ to distinguish our classes. We then use Henstock's Lemma to show that the Riemann sums of each class of subintervals for a given
$P_n$ converge to the Sequential Henstock integral taken over each class of subintervals. We shall also employ the convergence of $\{\int_I f_k\}$ and the pointwise convergence $f_k \to f$, as assumed by hypothesis. 

Since $A$ is the value of $\lim_{k=1}^\infty \int_If_k$, we want to show that $\int_If \to A$. We  take the definition of the Sequential Henstock integral of $f$ on $I$ and use the triangle inequality to establish a three-term inequality, determining a bound for each term separately. Thus, we have
 \begin{align}\label{mono3}
\left |S(f,P_n)-A \right|&\leq  \left| \sum_{i=1}^m f(t_i)l_n(I_i) - \sum_{i=1}^m f_{k(t_i)}(t_i)l_n(I_i) \right| \notag \\ &+ 
\left| \sum_{i=1}^m f_{k(t_i)}(t_i)l_n(I_i) - \sum_{i=1}^m \int_{I_i}f_{k(t_i)} \right| \notag \\ &+ \left| \sum_{i=1}^m \int_{I_i}f_{k(t_i)}-A \right|.
\end{align}

\noindent The 1st term on the right side in (\ref{mono3}) is bounded by
\begin{align}
\left| \sum_{i=1}^m f(t_i)l_n(I_i) - \sum_{i=1}^m f_{k(t_i)}(t_i)l_n(I_i) \right| 
 &= \left| \sum_{i=1}^m \left[ f(t_i)l_n(I_i) -  f_{k(t_i)}(t_i)l_n(I_i) \right] \right| \notag \\
&\leq \sum_{i=1}^m \left|  f(t_i)-f_{k(t_i)}(t_i) \right| l_n(I_i)\notag\\
&\leq \frac{\epsilon(b-a)}{3(b-a)}=\frac{\epsilon}{3} \qquad \qquad \text{(by telescoping sum)}. \label{mono3.5}
\end{align}

\noindent  The 2nd term on the right side in (\ref{mono3}) is bounded by the triangle inequality by
 \begin{equation}\label{mono4}
\sum_{i=1}^m \left| f_{k(t_i)}(t_i)l_n(I_i) -  \int_{I_i}f_{k(t_i)} \right|
\end{equation}

\noindent We use Bartle's approach \cite{Bar01} to estimate the sum in (\ref{mono4}). 

 Let  $s=max\{k(t_i), \dots, k(t_m)\}$, with each member of this set $\geq K$ by (\ref{mono2}). Since each partition $P_n$ in the sequence of $\delta_n(x)$-fine partitions $\{P_n\}$ has a finite number of tags, this maximum is well defined.  Because the positive integer $k(t_i)$ is by no means unique within the classes of tagged subintervals in any given $P_n\in \{P_n\}$, we take the sum first over all values of $i$ such that $k(t_i)=p\in \mathbb{N}$ for $p\geq K$; and then over $p=K, \dots,s$. 

Now, fix $p$ and consider all tags $t_i$ such that $k(t_i)=p$. There are at most $m$ of them. Here we notate that the choice of $\delta_n(t_i)$ depends on $p$. By definition,
$$I_i \subseteq [t_i-\tfrac{1}{2}\delta_{p_n}(t_i), t_i+\tfrac{1}{2}\delta_{p_n}(t_i)] \qquad \text{for } i\leq m$$ 

\noindent for each $\delta_{p_n}(x)\in \{\delta_{p_n}(x)\}_{n=1}^\infty$. Therefore these tagged subintervals $I_i$ form a \\ $\delta_{p_n}(x)$-fine subpartition of $I$. Hence, with (\ref{mono1}) satisfying the hypothesis for Henstock's Lemma for each $f_k$, we have 
 \begin{equation}\label{mono5.5}
\sum_{k(t_i)=p} \left| f_{k(t_i)}(t_i)l_n(I_i) -  \int_{I_i}f_{k(t_i)} \right| <\frac{2\epsilon}{3\cdot 2^{p+1}}=\frac{\epsilon}{3\cdot 2^p}.
\end{equation}

\noindent Next, sum over $p=K, \dots,s$, which yields 
 \begin{equation}\label{mono5}
\sum_{p=K}^s \frac{\epsilon}{3\cdot 2^p} = \frac{\epsilon}{3}\sum_{p=K}^s \frac{1}{2^p}<
 \frac{\epsilon}{3}\sum_{p=1}^\infty \frac{1}{2^p}= \frac{\epsilon}{3}.
\end{equation}

For the third term on the right side of (\ref{mono3}), we observe that $\{f_k\}$ increasing implies  if $K\leq k(t_i) \leq s$, then
$f_K\leq f_{k(t_i)} \leq f_s$. Theorem \ref{T:algprop}.4 implies that $\int_{I_i}f_K\leq \int_{I_i}f_{k(t_i)} \leq \int_{I_i}f_s$. Summing over $I_i$
for $i=1,2,\dots,m$, we have 
\[
\sum_{i=1}^m \int_{I_i}f_K\leq \sum_{i=1}^m \int_{I_i}f_{k(t_i)} \leq \sum_{i=1}^m \int_{I_i}f_s,
\]
which by Theorem \ref{T:additivity} becomes
\begin{align}
\int_{I}f_K\leq \sum_{i=1}^m \int_{I_i}f_{k(t_i)} \leq  \int_{I}f_s. \label{mono6}
\end{align}

\noindent But for $k(t_i)\geq K$, we observe that
\begin{align}
\left |\int_If_s-A\right|<\frac{\epsilon}{3}
\Rightarrow \int_If_s < A +\frac{\epsilon}{3}, \label{mono7}
\end{align}
and
\begin{align}
\left |\int_If_K-A\right|<\frac{\epsilon}{3}
\Rightarrow -\frac{\epsilon}{3}<\int_If_K -A \Rightarrow A-\frac{\epsilon}{3}<\int_If_K. \label{mono8}
\end{align}

\noindent Combining (\ref{mono6}), (\ref{mono7}), and (\ref{mono8}), we have
\begin{align}
A-\frac{\epsilon}{3} < \int_{I}f_K\leq \sum_{i=1}^m \int_{I_i}f_{k(t_i)} \leq  \int_{I}f_s< A+\frac{\epsilon}{3}, 
\end{align}

\noindent which implies
 \begin{align}
\left| \sum_{i=1}^m \int_{I_i}f_{k(t_i)} -A\right|< \frac{\epsilon}{3}. \label{mono9}
\end{align}

\noindent Using the bounds from (\ref{mono3.5}), (\ref{mono5}), and (\ref{mono9}) in the inequality from (\ref{mono3}), we have
\begin{align*}\label{mono3}
\left |S(f,P_n)-A \right|&\leq  \left| \sum_{i=1}^m f(t_i)l_n(I_i) - \sum_{i=1}^m f_{k(t_i)}(t_i)l_n(I_i) \right|  \\ 
&+ \left| \sum_{i=1}^m f_{k(t_i)}(t_i)l_n(I_i) - \sum_{i=1}^m \int_{I_i}f_{k(t_i)} \right|  \\ 
&+ \left| \sum_{i=1}^m \int_{I_i}f_{k(t_i)}-A \right| \\
&< \frac{\epsilon}{3}+\frac{\epsilon}{3}+\frac{\epsilon}{3}=\epsilon \qquad \text{for } P_n \ll \delta_n(x).
\end{align*}

\indent Thus $f\in H^*(I)$ and $\int_If=\lim_{k \to \infty} \int_If_k.$

\end{proof}

Note: Using Theorem \ref{mono} and Fatou's Lemma, the Dominated Convergence Theorem for Sequential Henstock integrable functions follows readily \cite{Bar01}. Therefore, we refer  the interested reader to Bartle \cite{Bar01}, pg. 121-123, as well as to Lee and Vyborny \cite{LV00} for a discussion of another important convergence theorem for the Henstock integral, that of Controlled Convergence. 
}

%\chapter{Conclusion}
{\large
%While the HK integral has been thoroughly developed in the literature using the  definition [Bar01, Gor94, LV00, Mcl80, Pfe93, Swa01, Tho13, Wel11], we are unaware of its development through the use of the sequential definition shown above. Thus, through the completion of this research proposal in Spring 2016, we hope to contribute to the theory of HK integration through the following results:Prove the properties, lemmas, and theorems given above in order to establish an additional rigorous foundation for the HK integral.Possibly lead to additional areas of application for the HK integral (via the sequential approach), which up until now has had limited success in this regard [Sch09, Tho13].Encourage mathematics educators to explore using the sequential definition as an introductory or supplemental method in integration theory [Lee08].

}
\section*{Summary}
{\large

In this paper, we discussed the limitations of the Riemann integral as well as those of the Lebesgue integral and pointed to the motivation for researchers who have sought a more general yet relatively simple integral on $\mathbb{R}$, such as finding an integral that can handle nowhere-continuous functions like (\ref{dirichlet}) and extreme oscillation functions like (\ref{denjoy}). To this end, Henstock and Kurzweil independently developed the Henstock integral in the mid-1950s, which when defined as a limit of Riemann sums, is only a slight modification of the Riemann integral. This they achieved by using a positive function, or gauge, that varies the width of each subinterval in any tagged partition instead of using a constant width for each subinterval.

While the standard definition of the Henstock integral uses an $\epsilon-\delta$ definition, we developed the Sequential Henstock integral by first showing its equivalence to the $\epsilon-\delta$ definition as well as to the Darboux definition using the limits of upper and lower sums and to a topological definition taken on a locally compact Hausdorff space---in this case $\mathbb{R}$. Further, we proved the basic properties and fundamental theorems of the Sequential Henstock integral, such as the Cauchy Criterion for Sequential Henstock Integrability, the Fundamental Theorem of Calculus, and the Monotone Convergence Theorem, thereby establishing a rigorous foundation for the Sequential Henstock integral upon which to perform additional research.

}
\section*{Conclusions \& Suggestions for Further Study}
{\large

After a thorough review of the literature on the Henstock integral, it is clear that interest in the topic peaked in the 1990s and early 2000s. The reason we found for this is that further applications of the integral were not uncovered and the Lebesgue integral is more readily generalized to abstract spaces than the Henstock integral is (see the remarks following Lemma \ref{L:Cousin}) although abstraction has been developed to some extent \cite{Sch09}.

Up until this research, however, the theory of Henstock integration did not include definitions and theorems based on sequences, and it is our opinion that the Sequential Henstock integral can be used to renew the interest of integration theorists and researchers in the Henstock integral.  In line with the stated goals, the results of this research can now be extended to investigations in abstract spaces and the applications that arise from this as well as to the inclusion of the Sequential Henstock integral in introductory calculus courses to assess the possible pedagogical benefits. Several suggestions for further study are as follows:

%\indent (1.) Re-examine the properties and theorems for sets of finite, countably infinite, and uncountable differences from given conditions in the hypotheses. For example, the Fundamental Theorem of Calculus Part 1 (Theorem \ref{T:FTC1}) can be stated for functions $F$ that are differentiable on all but a countably infinite set in $I$ or even on all but an uncountable set of measure zero, i.e. \emph{almost everywhere} \cite{Bar01}, on $I$. Further, we can restate the convergence theorems in Chapter 3 for \emph{almost-everywhere convergence} in which $f_k \to f$ almost everywhere on $I$. This would provide for a more general and useful form of many of the theorems for the Sequential Henstock integral. 

\indent (1.) Develop the Sequential Henstock integral theorems for various classes of functions, such as step functions, regulated functions, measurable functions, absolutely integrable functions, absolutely continuous functions, and generalized absolutely continuous functions (see \cite{Bar01} for definitions). Using these, directly show the equivalence between the Sequential Henstock integral and the Lebesgue integral, Denjoy integral, Perron integral, McShane integral, and others (see \cite{Gor94} for definitions). 

\indent (2.) Establish the Sequential Henstock integral in more abstract and general settings, starting first with infinite intervals in $\mathbb{R}$ and then extending to $\mathbb{R}^n$, metric spaces, or topological vector spaces. For example, using the generalized notion of a sequence in topology, the \emph{net} \cite{Mun00}, it may be possible to sufficiently generalize the Sequential Henstock integral to define it in topological spaces, such as metric spaces, normed vector spaces, or function spaces like Banach and Hilbert spaces (see \cite{Bar01} for definitions). 
}

{\large

}
\end{document}